\documentclass[11pt, a4paper]{amsart}
\usepackage{graphicx}
%\usepackage{ifthen}

%%%%%%%%%% Theorems %%%%%%%%%%%
\newtheorem{theorem}{Theorem}[section]
\newtheorem{proposition}[theorem]{Proposition}

\newtheorem{lemma}[theorem]{Lemma}
\newtheorem{corollary}[theorem]{Corollary}

\hoffset.5cm
\textheight21cm
\textwidth14cm
\oddsidemargin.35cm
\evensidemargin.35cm

\begin{document}
\baselineskip=16pt

\thispagestyle{empty}

\begin{center}\sf
{\Large Stability analysis for linear heat conduction with}\vskip.15cm
{\Large memory kernels described by Gamma functions}\vskip.2cm
{\small\tt GammaMem.tex}, \today\vskip.2cm
Corrado MASCIA\footnote{Dipartimento di
  Matematica ``G. Castelnuovo'', Sapienza -- Universit\`a di Roma, P.le Aldo
  Moro, 2 - 00185 Roma (ITALY), \texttt{\tiny mascia@mat.uniroma1.it}
  {\sc and} {Istituto per le Applicazioni del Calcolo, Consiglio Nazionale delle Ricerche
  (associated in the framework of the program ``Intracellular Signalling'')}}
\end{center}
\vskip.5cm

%\tableofcontents

\begin{quote}\small
{\bf Abstract.}
This paper analyzes heat equation with memory in the case of kernels that are
linear combinations of Gamma distributions.
In this case, it is possible to rewrite the non-local equation as a local system 
of partial differential equations of hyperbolic type.
Stability is studied in details by analyzing the corresponding dispersion
relation, providing sufficient stability condition for the general case and 
sharp instability thresholds in the case of linear combination of the first three 
Gamma functions.
\end{quote}

\noindent
{\small
{\it Keywords.} Diffusion equation, heat conduction, memory kernels, stability.\\
{\it 2000 MSC Classification.}
	Primary 		74D05;	% other materials with memory,  Linear constitutive equations
	Secondary 	35L45,	%PDE of hyperbolic type, Initial value problems for hyperbolic systems of first-order PDE
				35B35,	% PDE, Qualitative properties of solutions,  Stability, boundedness
				76R50.  	%diffusion
}
\vskip.25cm

\section{Introduction}
The present paper is based on three keywords: {\it diffusion}, {\it memory} and {\it stability}.
The starting ingredient is a scalar quantity $u$ satisfying
\begin{equation}\label{cons}
	\partial_t u(x,t) + \partial_x v(x,t)=0.
\end{equation}
where $x,t\in{\mathbb{R}}$ and $v$ is the {\it flux}.
Equation \eqref{cons} has to be completed with an additional relation for the unknown $v$.
The classical {\it Fourier law} (for heat conduction) prescribes that the flux is proportional to 
the space derivative of the unknown $u$,
\begin{equation}\label{fourierlaw}
	v(x,t)=-\alpha\,\partial_x u(x,t).
\end{equation}
The couple \eqref{cons}--\eqref{fourierlaw} gives raise to the well-known {\it heat equation},
which, being parabolic, has a number of appealing features (well-posedness, smoothing effects...)
At the same time, it also exhibits some deficiencies, the most striking being the infinite speed
of propagation of disturbances.
To mend such a flaw (and for other motivations which are not discussed here) alternative
laws for the flux $v$ has been proposed. 

The most famous, suggested by Cattaneo in \cite{Catt48}, assumes that the equilibrium between
flux and gradient, istantaneous in \eqref{fourierlaw}, is in fact described by a dynamical relation
of relaxation type, namely
\begin{equation}\label{catlaw}
	\tau\partial_t v(x,t)+v(x,t)+\alpha\,\partial_x u(x,t)=0
\end{equation}
where $\tau>0$ is an additional parameter describing the relaxation timescale.
The couple \eqref{cons}--\eqref{catlaw} gives raise to a {\it damped wave equation} for the unkwnon $u$.
Hyperbolicity of the model permits to trespass many of the flaws of the heat equation.

The Cattaneo's law \eqref{catlaw} can be rephrased into a more general framework 
in which the flux is not given by the {\it istantaneous} value of the gradient of $u$, but
it is rather an average of its history.
Precisely, equation \eqref{catlaw} is equivalent to the assumption that the flux $v$
satisfies the convolution relation 
\begin{equation}\label{memo}
	v(x,t)=-\int_{-\infty}^{t} g(t-s)\partial_x u(x,s)\,ds,
\end{equation}
with a kernel $g$ of exponential type, i.e. $g(t)=\alpha\,e^{-t/\tau}/\tau$.

As stated in \cite{JosePrez89}, {\it it would be a miracle if for some real conductor the relaxation kernel
could be rigorously represented by an exponential kernel with a single time of relaxation, as is required
by Cattaneo's model}.
Infact, the exponential choice can be considered as a stratagem which permits to trade the non-local
relation \eqref{memo} with the local relation \eqref{catlaw}, as a consequence of the property
\begin{equation}\label{cauchyexp}
	\tau \frac{dg}{dt}+g=0,\qquad \tau g(0)=\alpha.
\end{equation}
The conservation equation \eqref{cons} coupled with the {\it memory law} \eqref{memo}
gives raise to the non-local equation
\begin{equation}\label{memlaw}
	\partial_t u(x,t) -\int_{-\infty}^{t} g(t-s)\partial_{xx} u(x,s)\,ds = 0,
\end{equation}
which is meaningful for a class of {\it memory kernel} $g$ containing the pure exponentials
as a special case. 
The Fourier law \eqref{fourierlaw} can be recovered assuming that the kernel $g$ 
is a Dirac delta distribution concentrated at $t=0$.

Considering relation \eqref{memlaw} for modelling heat conduction has been proposed 
many years ago in \cite{GurtPipk68}, 
%{\tt [...e Coleman? Coleman-Gurtin? Coleman-Noll?]}
and it has received an increasing interest in recent years\footnote{Out of curiosity,
analyzing the database MathSciNet, article \cite{GurtPipk68}, published in 1968,
was cited for the first time in 1982, for the second time in 1998, and it has now more
than two hundred of citations, with an average of more than 15 citations for year in the period 2000-2014.}
(among others, see \cite{ChepMainPata06,ContMarcPata13,GiorNasoPata01} and references therein).
%%	no. citazioni per Gurtin Pipkin 1968, totale 202 citazioni
%	70-79	80-89	90-99	00-04	05-09	10-14	
%	0		1		1		54		76		79
%	82:1   - 98:1   - 99:00
%	00:08 - 01:14 - 02:06 - 03:14 - 04:12 - 05:11 - 06:14 - 07:18 - 08:14 - 09:19
%	10:25 - 11:20 - 12:12 - 13:14 - 14:08
%%
Usually, it is assumed that the function $g$ is a function with values in $[0,\infty)$ and such that
\begin{equation}\label{normg}
	\alpha:=\int_0^\infty g(s)\,ds<\infty.
\end{equation}
The value of $\alpha$ describes the intensity of the overall memory effect.

A key question is: {\it under which assumption on the memory kernel $g$, the dynamics is stable,
viz. there is no exponential growth of perturbations of constant states?}
In the two basic examples, Fourier \eqref{fourierlaw} and Cattaneo \eqref{catlaw} laws, 
stability holds as a consequence of a significant localization of the kernel $g$ at $t=0$.
How is the situation for memory kernels exhibiting a wider spreading in the past history?
Paraphrasing \cite{Fich79}, does having a lasting memory cause the appearance of nontrivial patterns?

Interesting answers has been given in \cite{ChepMainPata06,ContMarcPata13},
based on a semigroup theory approach.
The spirit is to determine properties on the function $g$ which guarantees stability
of the associated semigroup.
Over-simplifying (and suggesting to the interested reader to look at the originals),
the first article provides a necessary condition for stability, namely
$g'(t+s)\geq C\,e^{-\delta t}\,g'(s)$ for $t\geq 0$, $s>0$ for some $C\geq 1$, $\delta>0$,
which turns to be equivalent to $g+C\,g'\leq 0$ in $(0,\infty)$ for some $C>0$.
The second shows its sufficiency in the class of convex kernels $g$.

Here, the philosophy is different:
rather than considering general memory kernels, we analyze in details a specific class,
extending the pure exponentials, and still permitting a translation of the non-local term \eqref{memo}
in a set of local PDEs, so that the complete model turns to be equivalent to a linear hyperbolic system.
For such class of systems, we analyze the stability problem by means of a frequency analysis.

The form of the memory kernels is suggested by the fact that variations of the property \eqref{cauchyexp}
are satisfied by a more general class of functions: the {\it Gamma distributions}.
These form a two-parameter family defined as follows: given a positive real number $\tau$
and an integer $j\geq 1$, the Gamma distribution with {\it shape} $j$ and {\it scale} $\tau$ is
\begin{equation}\label{gammaktau}
	g(s;j,\tau):=\frac{1}{(j-1)!\tau^j}\,s^{j-1}\,e^{-s/\tau}.
\end{equation}
A sketch of the graph of the Gamma functions for the first values of $j$ (and $\tau=1$)
is depicted in Figure \ref{fig:gammagraph}.
Shape $j$ equal to $1$ corresponds to the exponential choice.
\begin{figure}
\includegraphics[width=9cm]{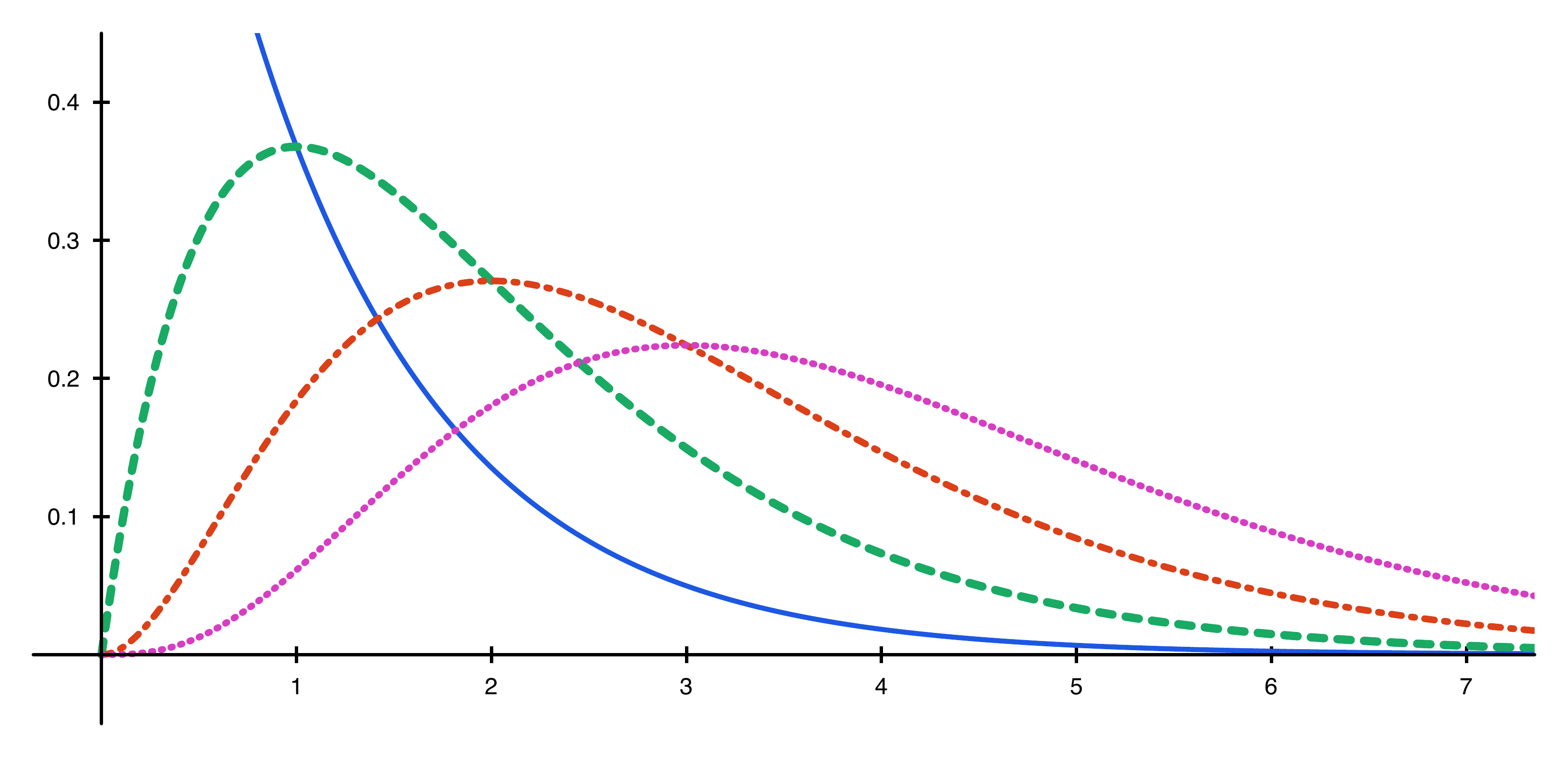}
\caption{\footnotesize Graph of the Gamma distributions for $\tau=1$ and $j=1$ (continuous, blue),
$j=2$ (dash, green), $j=3$ (dot-dash, red), $j=4$ (dot, magenta).}
\label{fig:gammagraph}
\end{figure}
Such functions are also defined for $j$ real,
%substituting the factorial term $(k-1)!$ with the Gamma function $\alpha$ computed at $k$,
but this case is not considered here for a reason that will be clear in a few lines.
In what follows, the functions $g(\cdot;j,\tau)$ will often be denoted by $g_j$, omitting the
dependence on $\tau$.

The choice of memory kernel given by Gamma distributions is common in the analysis of delay
differential equations, but, apparently more rare for PDEs.
Indeed, apart for the fact that different values $j$ correspond to different shape of the kernel
and, in particular, different dependencies on the past history in the diffusive dynamics, there is
no specific physical motivation for such a choice.

The main advantage is merely technical and it is encoded in what H. Smith calls the {\it linear chain trick}
(see \cite{Smit11}).
It consists in the fact that for such kernels it is possible to get rid of the memory term 
and to transform the non-local equation into a finite set of partial differential equations,
exactly as in the case of the exponential memory term relation \eqref{memlaw} is equivalent to
the hyperbolic system \eqref{cons}--\eqref{catlaw}.
%Such reduction permits to apply the Laplace-Fourier transform to the system and to reduce
%the stability problem to the analysis of the dispersion relation.

\subsection*{Presentation of the main results.}
To enter into the details, given a natural number $k$, let $\mathbf{g}$ be the vector with components
$(g_1, g_2,\dots,g_{k+1})$ where $g_j$ are defined in \eqref{gammaktau}, by $\mathbb{I}_{k}$ the $k\times k$
identity matrix and by $\mathbb{N}_{k+1}$ the $(k+1)\times (k+1)$ matrix with $1$ on the subdiagonal
\begin{equation*}
	\mathbb{N}_{k+1}=\begin{pmatrix}	0_{1\times(k+1)} 	& 0	\\
							\mathbb{I}_{k} 		& 0_{(k+1)\times 1}
				\end{pmatrix}.
\end{equation*}
Then, there holds 
\begin{equation*}
	\tau\frac{d\mathbf{g}}{dt}+\bigl(\mathbb{I}_{k+1}-\mathbb{N}_{k+1}\bigr) \mathbf{g}=0,\qquad
	\tau\mathbf{g}(0)=\mathbf{e}_1,
\end{equation*}
where $\mathbf{e}_1=(1,0,\dots,0)$, to be considered as an extension of \eqref{cauchyexp}.
As a consequence, for any kernel $g$ given by linear combinations of Gamma distributions, it is possible
to convert the convolution relation \eqref{memo} into a set of differential equations for some auxiliary
functions, whose linear combination, furnishes the flux $v$ given in \eqref{cons}.
Precisely, given $\tau>0$, $k\geq 1$ integer and $\boldsymbol{\theta}=(\theta_1,\dots,\theta_{k+1})\in{\mathbb{R}}^{k+1}$,
if the memory kernel $g$ is given by
\begin{equation}\label{kernelform}
	g(t)=\boldsymbol{\theta}\cdot \mathbf{g}(t)=\sum_{j=1}^{k+1} \theta_j g(t;j,\tau),
\end{equation}
then the non-local equation \eqref{memlaw} is equivalent to a system for $(u,\mathbf{v})\in{\mathbb{R}}^{k+2}$ 
\begin{equation}\label{main}
	\partial_t u + \partial_x \bigl(\boldsymbol{\theta}\cdot \mathbf{v}\bigr)=0,\qquad
	\tau\partial_t \mathbf{v}+\mathbf{e}_1\partial_x u
			+\bigl(\mathbb{I}_{k+1}-\mathbb{N}_{k+1}\bigr)\mathbf{v}=0.
\end{equation}
System \eqref{main}, which is a generalization of the Cattaneo diffusion equation
(recovered for $k=0$), turns to be (non strictly) hyperbolic whenever $\theta_1>0$
(see Section \ref{sec:chaintrick}).
The choice \eqref{kernelform} for the kernel $g$ allows the presence of a single time-scale $\tau$
which is from now on normalized to $1$.
The compresence of multiple time scales is meaningful, but it is not considered here.

The goal of this paper is to analyze in details the stability/instability properties of \eqref{main} 
for different ranges of the parameter $\boldsymbol{\theta}$, by means of a detailed description
of the {\it dispersion relation}
\begin{equation}\label{disprel0}
	p(\lambda,\mu):=\det\begin{pmatrix}
		\lambda u 			&	\mu \boldsymbol{\theta} \\
		\mathbf{e}_1\mu	&	(\lambda+1)\mathbb{I}_{k+1}-\mathbb{N}_{k+1}
		\end{pmatrix}=0
\end{equation}
Any couple $(\lambda, \mu)$ such that \eqref{disprel} holds corresponds to a solution of \eqref{main}
with the form $(u,\mathbf{v})=(U,\mathbf{V})e^{\lambda t+\mu x}$.
Then, the stability character of the model \eqref{main}  is encoded in the sign of the real part of the
elements of the sets $\Lambda(\xi)$ for $\xi\in{\mathbb{R}}$, where 
\begin{equation*}
	\Lambda(\xi):=\{\lambda\in{\mathbb{C}}\,:\,p(\lambda, i\xi)=0\}.
\end{equation*} 
Such Fourier analysis can be translated into the description of the $L^2-$decay in time of solutions
with data in $L^2\cap L^1$ following a standard procedure, based on the use of inverse Fourier transform.

General results concerning the behavior of the dispersion relation for small and large
frequencies are encoded in the following statement.

\begin{theorem}\label{thm:smalllarge}
Let the memory kernel $g$ in \eqref{memo} be of the form $\boldsymbol{\theta}\cdot \mathbf{g}$
for some $\boldsymbol{\theta}\in{\mathbb{R}}^{k+1}$ with $\theta_1>0$ and $\theta_j\geq 0$ for any $j$.\par
{\bf I.} {\sl [small frequencies]} There exist $r,\delta>0$ such that for $|\xi|<r$ the set
$\Lambda(\xi)\cap\{\lambda\,:\, {\textrm{Re}}\lambda\geq -\delta\}$ is composed by a single
branch $\lambda=\lambda_0(\xi)$ and there holds
\begin{equation*}
	\lambda_0(\xi)=-\left(\boldsymbol{\theta}\cdot \mathbf{1}\right)\xi^2+o(\xi^2)
		\qquad\textrm{as}\quad \xi\to 0,
\end{equation*}
where $\mathbf{1}=(1,\dots,1)\in{\mathbb{R}}^{k+1}$.\par
{\bf II.} {\sl [large frequencies]} There exist $R,\delta>0$ such that for $|\xi|>R$ there holds
$\Lambda(\xi)\subset\{\lambda\,:\,{\textrm{Re}}\lambda\leq -\delta\}$
if and only if
\begin{equation}\label{eta2small}
	\theta_2<\theta_1
\end{equation}
and all of the roots of the polynomial
\begin{equation}\label{HFpoly}
	Q(x)=\sum_{\ell=0}^{k} q_\ell x^\ell\qquad\textrm{where}\quad
	q_\ell=\sum_{j=1}^{k+1-\ell}\binom{k+1-j}{\ell}\theta_{j}.
\end{equation}
have strictly negative real parts.
\end{theorem}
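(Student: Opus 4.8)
The plan is to replace the block determinant in \eqref{disprel0} by a single scalar polynomial in $\lambda$ and then analyze its roots perturbatively, once as $\xi\to0$ and once as $\xi\to\infty$. \emph{Step 1: reduction.} The lower--right block $(\lambda+1)\mathbb{I}_{k+1}-\mathbb{N}_{k+1}$ is lower triangular with all diagonal entries equal to $\lambda+1$, so for $\lambda\neq-1$ it has determinant $(\lambda+1)^{k+1}$ and inverse $\sum_{m=0}^{k}(\lambda+1)^{-m-1}\mathbb{N}_{k+1}^{m}$ (the sum is finite because $\mathbb{N}_{k+1}^{k+1}=0$). Computing the determinant in \eqref{disprel0} by the Schur complement formula and using $\mathbb{N}_{k+1}^{m}\mathbf{e}_1=\mathbf{e}_{m+1}$ gives
\[
	p(\lambda,\mu)=\lambda(\lambda+1)^{k+1}-\mu^2\sum_{j=1}^{k+1}\theta_j(\lambda+1)^{k+1-j}
	=\lambda(\lambda+1)^{k+1}-\mu^2 Q(\lambda),
\]
the last equality being just the binomial expansion of each $(\lambda+1)^{k+1-j}$ followed by a rearrangement of powers, which reproduces exactly the coefficients $q_\ell$ of \eqref{HFpoly}; in particular $Q(0)=q_0=\boldsymbol{\theta}\cdot\mathbf{1}$ and, since $\theta_1>0$, $Q$ has degree $k$ with leading coefficient $q_k=\theta_1$. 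Hence $\Lambda(\xi)$ is the zero set of the monic polynomial $\lambda(\lambda+1)^{k+1}+\xi^2Q(\lambda)$ of degree $k+2$, which depends on $\xi$ only through $s:=\xi^2$.

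\emph{Step 2: small frequencies.} At $s=0$ the polynomial is $\lambda(\lambda+1)^{k+1}$, with a simple zero at $\lambda=0$ and a zero of multiplicity $k+1$ at $\lambda=-1$. Since $\partial_\lambda\bigl[\lambda(\lambda+1)^{k+1}\bigr]\big|_{\lambda=0}=1\neq0$, the implicit function theorem in the parameter $s$ produces an analytic branch $\lambda_0=\lambda_0(s)$, $\lambda_0(0)=0$, solving $\lambda_0(\lambda_0+1)^{k+1}+sQ(\lambda_0)=0$; matching the coefficient of $s$ gives $\lambda_0=-Q(0)\,\xi^2+o(\xi^2)=-(\boldsymbol{\theta}\cdot\mathbf{1})\xi^2+o(\xi^2)$. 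The other $k+1$ zeros remain in a small neighbourhood of $-1$ for $|\xi|<r$ (continuity of roots, e.g.\ by Rouch\'e), so they have real part below $-\tfrac12$, whereas $\lambda_0(\xi)$ stays near $0$; choosing $\delta=\tfrac12$ and $r$ small produces exactly the claimed description of $\Lambda(\xi)\cap\{\operatorname{Re}\lambda\geq-\delta\}$.

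\emph{Step 3: large frequencies.} Dividing by $\xi^2$, the polynomial converges on bounded sets, as $\xi\to\infty$, to $Q(\lambda)$, which has degree $k$; by Hurwitz's theorem exactly $k$ of the $k+2$ zeros converge (with multiplicity) to the zeros of $Q$, while the remaining two diverge. For the latter, rescaling $\lambda=\xi\zeta$ and keeping the dominant balance yields $\zeta^2+\theta_1=0$, so $\lambda=\pm i\sqrt{\theta_1}\,\xi+o(\xi)$. To locate their real parts I would push the expansion one order: with $w=\lambda+1$ the dispersion relation, divided by $w^{k}$, reads $w^2-w+\theta_1\xi^2+\theta_2\xi^2/w+O(\xi^2/w^2)=0$, and inserting $w=\pm i\sqrt{\theta_1}\,\xi+b_0+o(1)$ and matching the $O(\xi)$ terms forces $b_0=(\theta_1+\theta_2)/(2\theta_1)$, whence for both diverging branches
\[
	\operatorname{Re}\lambda=b_0-1+o(1)=\frac{\theta_2-\theta_1}{2\theta_1}+o(1)\qquad(\xi\to\infty).
\]
Consequently, for $|\xi|$ large all elements of $\Lambda(\xi)$ have real part $\leq-\delta$ for some $\delta>0$ if and only if the two diverging branches do, i.e.\ $\theta_2<\theta_1$, and the $k$ convergent branches do, i.e.\ every zero of $Q$ has strictly negative real part; if either of these fails, the offending branch has $\operatorname{Re}\lambda\to c\geq0$, which rules out any such uniform bound and yields the ``only if'' direction, including the borderline cases $\theta_2=\theta_1$ and zeros of $Q$ on the imaginary axis.

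The routine parts are the block--determinant computation of Step 1 and the Rouch\'e/Hurwitz continuity statements; the one genuinely delicate point is the second--order asymptotics of the diverging roots in Step 3, where the leading balance $\zeta^2=-\theta_1$ is effortless but one must go one order further to read off the sign of $\operatorname{Re}\lambda$ — and it is precisely this computation that singles out the threshold $\theta_2<\theta_1$ in \eqref{eta2small}.
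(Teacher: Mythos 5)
Your argument is correct and follows essentially the same route as the paper: derive the explicit dispersion polynomial $p(\lambda,\mu)=\lambda(\lambda+1)^{k+1}-\mu^2\sum_{j=1}^{k+1}\theta_j(\lambda+1)^{k+1-j}$ and track its $k+2$ roots perturbatively as $\xi\to 0$ and $\xi\to\infty$, finding the slow branch $\lambda_0\sim-(\boldsymbol{\theta}\cdot\mathbf{1})\xi^2$, the two fast branches with ${\textrm{Re}}\,\lambda\to(\theta_2-\theta_1)/2\theta_1$, and the $k$ bounded branches converging to the roots of $Q$. The only cosmetic differences are that the paper obtains the determinant formula by induction on $k$ rather than by a Schur complement, and carries out the root-tracking via formal asymptotic expansions where you invoke the implicit function theorem and Hurwitz/Rouch\'e to the same effect.
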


Part {\bf II.} of Theorem \ref{thm:smalllarge} encodes high-frequency stability in polynomial $Q$.
As an example, in the case $k=1$, there holds $Q(x)=\theta_{1}+\theta_{2}+\theta_1 x$
with the root $-1-\theta_2/\theta_1<0$.
If $k=2$, the polynomial $Q$, given by
\begin{equation*}
	Q(x)=\theta_{1}+\theta_{2}+\theta_{3}+(2\theta_1+\theta_2) x+\theta_1 x^2,
\end{equation*}
has roots with real part $-1-\theta_2/2\theta_1<0$
In both cases, large frequencies stability holds if and only if condition \eqref{eta2small} is satisfied.

If both small and large frequencies stability holds, in order to estabilish the complete dynamical
property of the system, it is necessary to analyze the behavior of the dispersion relation for
intermediate values of the frequency.
Specifically, it is relevant to analyze the existence/non existence of couples $(\lambda,\mu)$
with the form $(i\zeta,i\xi)$ such that \eqref{disprel0} is satisfied.
In case of non-existence of such couples, collecting with the properties that holds in case
of small and large frequencies stability, one can state that there exists $c_0>0$ such that
\begin{equation}\label{stability}
	{\textrm{Re}}\lambda \leq -\frac{c_0|\xi|^2}{1+|\xi|^2}
	\qquad\forall\xi\in{\mathbb{R}},\,\lambda\in\Lambda(\xi).
\end{equation}
The validity of such estimate guarantees that the
{\it Shizuta--Kawashima condition} is satisfied and thus the model 
exhibits dissipation (see \cite{ShizKawa85} and descendants).

This paper furnishes two additional results in this direction.
The first one gives sufficient condition for stability for general values of $k$.

\begin{theorem}\label{thm:anyk}
Let the memory kernel $g$ in \eqref{memo} be of the form $\boldsymbol{\theta}\cdot \mathbf{g}$
for some $\boldsymbol{\theta}\in{\mathbb{R}}^{k+1}$ with $\theta_1>0$ and $\theta_j\geq 0$ for any $j$.
If the coefficients $\theta_j$ are such that
\begin{equation}\label{condanyk}
	\sum_{j=2}^{k+1}\theta_j<\theta_1,
\end{equation}
then there exists $c_0>0$ such that \eqref{stability} holds.
\end{theorem}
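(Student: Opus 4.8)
The plan is to put the dispersion relation in closed form and then reduce the decay estimate \eqref{stability} to a single sign inequality in which the gap $\theta_1-\sum_{j\geq2}\theta_j$ is the only thing that matters. Computing the Schur complement in \eqref{disprel0} with respect to the lower-right block $M:=(\lambda+1)\mathbb{I}_{k+1}-\mathbb{N}_{k+1}$ — which is lower triangular, so $\det M=(\lambda+1)^{k+1}$, and whose first column of $M^{-1}$ is the column $\bigl((\lambda+1)^{-1},\dots,(\lambda+1)^{-(k+1)}\bigr)$ — one obtains
\[ p(\lambda,i\xi)=\lambda(\lambda+1)^{k+1}+\xi^{2}N(\lambda),\qquad N(\lambda):=\sum_{j=1}^{k+1}\theta_{j}(\lambda+1)^{k+1-j}. \]
By the discussion preceding the statement, \eqref{stability} follows once three facts are established: (a) small-frequency stability; (b) large-frequency stability; (c) that $p(\cdot,i\xi)$ has no purely imaginary root for any real $\xi\neq0$. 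Fact (a) is exactly Theorem \ref{thm:smalllarge}.I and needs nothing beyond $\theta_1>0$ (indeed $\boldsymbol{\theta}\cdot\mathbf{1}\geq\theta_1>0$), so the assumption \eqref{condanyk} enters only through (b) and (c).

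For (b) I would apply Theorem \ref{thm:smalllarge}.II and verify its two conditions. The first, $\theta_2<\theta_1$, is immediate from \eqref{condanyk} since $\theta_2\leq\sum_{j=2}^{k+1}\theta_j$. For the second, note that expanding $(\lambda+1)^{k+1-j}$ and collecting powers shows that $N$ is, as a polynomial, precisely the polynomial $Q$ of \eqref{HFpoly}; hence it suffices to check that every root of $N$ has strictly negative real part. If $N(\lambda)=0$ with $\mathrm{Re}\,\lambda\geq0$, set $w:=\lambda+1$, so $|w|\geq1$; dividing by $w^{k}$ gives $\theta_1=-\sum_{j=2}^{k+1}\theta_{j}w^{1-j}$, whence $\theta_1\leq\sum_{j=2}^{k+1}\theta_{j}\,|w|^{1-j}\leq\sum_{j=2}^{k+1}\theta_{j}<\theta_1$, a contradiction.

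For (c), suppose $p(i\zeta,i\xi)=0$ with $\zeta\in{\mathbb{R}}$ and $\xi\neq0$, and put $w:=1+i\zeta$, so that $\mathrm{Re}\,w=1$ and $|w|^{2}=1+\zeta^{2}\geq1$. Dividing the identity $p(i\zeta,i\xi)=0$ by $w^{k+1}$ yields $w-1+\xi^{2}\sum_{j=1}^{k+1}\theta_{j}w^{-j}=0$; taking real parts and using $\mathrm{Re}\,w=1$ and $\xi\neq0$ forces $\mathrm{Re}\bigl(\sum_{j=1}^{k+1}\theta_{j}w^{-j}\bigr)=0$. But $\mathrm{Re}(w^{-1})=|w|^{-2}$, while $|\mathrm{Re}(w^{-j})|\leq|w|^{-j}\leq|w|^{-2}$ for $j\geq2$, so, using $\theta_{j}\geq0$ and \eqref{condanyk},
\[ \mathrm{Re}\Bigl(\sum_{j=1}^{k+1}\theta_{j}w^{-j}\Bigr)\geq\frac{1}{|w|^{2}}\Bigl(\theta_{1}-\sum_{j=2}^{k+1}\theta_{j}\Bigr)>0, \]
a contradiction; thus (c) holds. (This is the same estimate as in (b), applied on the imaginary axis rather than to the limiting roots of the bounded high-frequency branches.)

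It remains to glue the three regimes into the single estimate \eqref{stability}. Since $p(\cdot,i\xi)$ is monic of fixed degree $k+2$ with coefficients polynomial in $\xi$, its roots — hence $\xi\mapsto\max\{\mathrm{Re}\,\lambda:\lambda\in\Lambda(\xi)\}$ — depend continuously on $\xi$; on each compact set $r\leq|\xi|\leq R$ furnished by Theorem \ref{thm:smalllarge} this function never vanishes by (c) and is negative at the endpoints by (a)--(b), hence is bounded above there by some $-\varepsilon<0$. For $|\xi|<r$ one has $\mathrm{Re}\,\lambda_{0}(\xi)\leq-\tfrac{1}{2}(\boldsymbol{\theta}\cdot\mathbf{1})|\xi|^{2}$ (after shrinking $r$) and $\mathrm{Re}\,\lambda\leq-\delta$ on the remaining branches, and for $|\xi|>R$ one has $\mathrm{Re}\,\lambda\leq-\delta$; comparing each of these with $-c_{0}|\xi|^{2}/(1+|\xi|^{2})$ and taking $c_{0}:=\min\{\tfrac{1}{2}\theta_{1},\delta,\varepsilon\}$ (with $r$ small enough that $\tfrac{1}{2}\theta_{1}r^{2}<\delta$) yields \eqref{stability}. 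I do not expect a genuine obstacle: the only real content is the sign inequality of (c), which is exactly where the gap \eqref{condanyk} is consumed, and the only mildly delicate point is checking that $c_{0}$ can be chosen uniformly across the three frequency ranges — the routine compactness bookkeeping just sketched.
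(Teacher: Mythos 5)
Your proposal is correct and follows essentially the same route as the paper: the decisive step is ruling out purely imaginary roots for intermediate frequencies by isolating the $\theta_1$ term and bounding the remaining sum by $\sum_{j\geq 2}\theta_j$ (your estimate on $\mathrm{Re}\bigl(\sum_j\theta_j w^{-j}\bigr)$ with $w=1+i\zeta$ is exactly the paper's relation \eqref{realimag} and the bound in its proof, rewritten), while the high-frequency part reproduces the argument of Corollary \ref{coro:suffhighstab}. Your write-up is somewhat more explicit than the paper's about verifying the hypotheses of Theorem \ref{thm:smalllarge}.II and about the compactness bookkeeping needed to extract a uniform $c_0$, but the mathematical content is the same.
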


Some of the kernels $g$ for which the hypotheses of Theorem \ref{thm:anyk} hold
change convexity for $t\in (0,\infty)$.
Indeed, for $k=1$ there hold
\begin{equation}
	g(t)=(\theta_1+\theta_2 t)e^{-t}
	\qquad\textrm{and}\qquad
	g''(t)=(\theta_1-2\theta_2+\theta_2 t)e^{-t},
\end{equation}
so that, when $\theta_1<2\theta_2$,
$g''$ is negative in $(0,2-\theta_1/\theta_2)$ and positive otherwise.
In particular, for $\theta_2<\theta_1<2\theta_2$, the kernel $g$ is non-convex and nevertheless
determine a stable dynamics.
Similar properties hold for general $k$ and $\theta_j$ small for $j>2$.

Let us observe that memory kernel derived by homegenization as in \cite{ChowChri10},
while exhibiting decreasing monotonicity, appear to be non-convex.

The second result is specific to the case $k=2$ and furnishes a complete characterization
for the values of $\boldsymbol{\theta}$ guaranteeing stability.

\begin{theorem}\label{thm:k=2}
Let $k=2$, so that
\begin{equation*}
	g(t)=\left(\theta_1+\theta_2 t+\frac12\theta_3 t^2\right)e^{-t}.
\end{equation*}
Then, assuming $\theta_1>0$ and $\theta_2, \theta_3\geq 0$,
estimate \eqref{stability} holds if and only if
\begin{equation}\label{condk=2}
	\theta_2<\theta_1
	\qquad\textrm{and}\qquad
	\left\{\begin{aligned}
	&\textrm{either}\quad 3\theta_3<2\theta_1,\\
	&\quad\textrm{or}\quad
	(2\theta_2+\theta_3)^2+8(\theta_3-\theta_1)^2<8\theta_1^2.
	\end{aligned}\right.
\end{equation}
\end{theorem}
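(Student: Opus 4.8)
To prove Theorem~\ref{thm:k=2}, the plan is to make the dispersion relation \eqref{disprel0} completely explicit and then split the analysis into small, large and intermediate frequencies, invoking Theorem~\ref{thm:smalllarge} for the first two regimes and reducing the third to a scalar quadratic condition.

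First I would expand the determinant in \eqref{disprel0} by the Schur complement of the block $(\lambda+1)\mathbb{I}_{k+1}-\mathbb{N}_{k+1}$. Since $\det\bigl((\lambda+1)\mathbb{I}_{k+1}-\mathbb{N}_{k+1}\bigr)=(\lambda+1)^{k+1}$ and $\bigl((\lambda+1)\mathbb{I}_{k+1}-\mathbb{N}_{k+1}\bigr)^{-1}\mathbf{e}_1=\bigl((\lambda+1)^{-1},\dots,(\lambda+1)^{-(k+1)}\bigr)$, for $k=2$ and $\mu=i\xi$ the relation reads
\begin{equation*}
	p(\lambda,i\xi)=\lambda(\lambda+1)^3+\xi^2\bigl[\theta_1(\lambda+1)^2+\theta_2(\lambda+1)+\theta_3\bigr]=0.
\end{equation*}
By Theorem~\ref{thm:smalllarge}, Part~{\bf I}, small frequencies are always stable, since $\boldsymbol{\theta}\cdot\mathbf{1}=\theta_1+\theta_2+\theta_3>0$; by Theorem~\ref{thm:smalllarge}, Part~{\bf II}, large frequencies are stable if and only if $\theta_2<\theta_1$, because for $k=2$ the polynomial $Q$ of \eqref{HFpoly} has strictly positive coefficients and hence both its roots in $\{{\textrm{Re}}<0\}$. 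It then remains to determine, among the $\boldsymbol{\theta}$ with $\theta_2<\theta_1$, those for which $p(\cdot,i\xi)$ has no purely imaginary root for any $\xi\neq 0$: since the root set $\Lambda(\xi)$ depends continuously on $\xi$ and lies in $\{{\textrm{Re}}<0\}$ for $|\xi|$ small and for $|\xi|$ large, the absence of purely imaginary roots forces $\Lambda(\xi)\subset\{{\textrm{Re}}<0\}$ for all $\xi\neq 0$ by a connectedness argument, and the asymptotics of Theorem~\ref{thm:smalllarge} together with compactness on the sets $r\le|\xi|\le R$ upgrade this to \eqref{stability}; conversely, a purely imaginary root, or $\theta_2\geq\theta_1$, defeats \eqref{stability}.

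Next I would search for purely imaginary roots by setting $\lambda=i\zeta$ and separating real and imaginary parts, which gives
\begin{equation*}
	\zeta^4-3\zeta^2+\xi^2\bigl[\theta_1(1-\zeta^2)+\theta_2+\theta_3\bigr]=0,\qquad
	\zeta\bigl[1-3\zeta^2+\xi^2(2\theta_1+\theta_2)\bigr]=0.
\end{equation*}
The second relation yields either $\zeta=0$, which by the first forces $\xi=0$, or $\xi^2=(3\zeta^2-1)/(2\theta_1+\theta_2)$, necessarily with $\zeta^2>\tfrac13$. Inserting the latter into the first relation and clearing denominators reduces, after routine algebra, to $\Phi(\zeta^2)=0$ where
\begin{equation*}
	\Phi(y):=(\theta_1-\theta_2)\,y^2+(2\theta_1-3\theta_3)\,y+(\theta_1+\theta_2+\theta_3),
\end{equation*}
so a nontrivial purely imaginary root exists precisely when $\Phi$ has a root in $(\tfrac13,\infty)$. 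Under the hypotheses $\theta_1>\theta_2\geq 0$ and $\theta_3\geq 0$, the parabola $\Phi$ opens upward, with $\Phi(0)=\theta_1+\theta_2+\theta_3>0$, $\Phi(\tfrac13)=\tfrac19(16\theta_1+8\theta_2)>0$, and product of roots $(\theta_1+\theta_2+\theta_3)/(\theta_1-\theta_2)\geq 1$; hence such a root exists if and only if the discriminant
\begin{equation*}
	\Delta=(2\theta_1-3\theta_3)^2-4(\theta_1-\theta_2)(\theta_1+\theta_2+\theta_3)
		=9\theta_3^2+4\theta_2\theta_3+4\theta_2^2-16\theta_1\theta_3
\end{equation*}
is nonnegative and the root-sum $(3\theta_3-2\theta_1)/(\theta_1-\theta_2)$ is nonnegative, that is $3\theta_3\geq 2\theta_1$. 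Negating, and observing that $\Delta<0$ is exactly $(2\theta_2+\theta_3)^2+8(\theta_3-\theta_1)^2<8\theta_1^2$, I would conclude that no nontrivial purely imaginary root exists if and only if $3\theta_3<2\theta_1$ or $(2\theta_2+\theta_3)^2+8(\theta_3-\theta_1)^2<8\theta_1^2$, which together with $\theta_2<\theta_1$ is precisely \eqref{condk=2}.

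The step I expect to be the main obstacle is the bookkeeping in this last paragraph: one has to handle carefully the equality cases ($\Delta=0$ and $3\theta_3=2\theta_1$) and the signs of the roots of $\Phi$ --- the crucial point being that the product of the roots exceeds $\tfrac19$ while $\Phi(\tfrac13)>0$, so that whenever $\Delta\geq 0$ and $3\theta_3\geq 2\theta_1$ the larger root necessarily lies in $(\tfrac13,\infty)$ --- and one has to make the continuity and compactness argument precise enough to deliver the full decay estimate \eqref{stability} rather than merely the sign condition ${\textrm{Re}}\,\lambda<0$.
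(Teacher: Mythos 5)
Your proposal is correct and follows essentially the same route as the paper: reduce the question to the existence of purely imaginary roots $\lambda=i\zeta$, obtain a quadratic in $\zeta^2$ (your $\Phi$ is exactly the paper's equation \eqref{defesse}), and characterize the existence of admissible positive roots via the sign of the root-sum and the discriminant, which after expansion is precisely $(2\theta_2+\theta_3)^2+8(\theta_3-\theta_1)^2-8\theta_1^2$. The only cosmetic difference is the elimination order (you solve the imaginary part for $\xi^2$ and substitute, while the paper's real-part equation is $\xi$-free from the start), and your additional check that positive roots automatically satisfy $\zeta^2>\tfrac13$ is consistent with the paper's explicit positive formula for $\xi^2$.
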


The conditions on the parameters $\theta_1,\theta_2,\theta_3$ define three regions
corresponding to three different behaviors.
Setting $\eta_2=\theta_2/\theta_1$ and $\eta_3=\theta_3/\theta_1$,
the region of high frequency stability is $\eta_2<1$.
In this strip, the set where condition \eqref{stability} holds is
\begin{equation*}
	\{(\eta_2,\eta_3)\,:\, \eta_3<2/3\}\cup
	\{(\eta_2,\eta_3)\,:\, (2\eta_2+\eta_3)^2+8(\eta_3-1)^2<8\},
\end{equation*}
where the second set describes the interior of an ellipsis
(see Figure \ref{fig:stabregions1}).
\begin{figure}
\includegraphics[width=10cm]{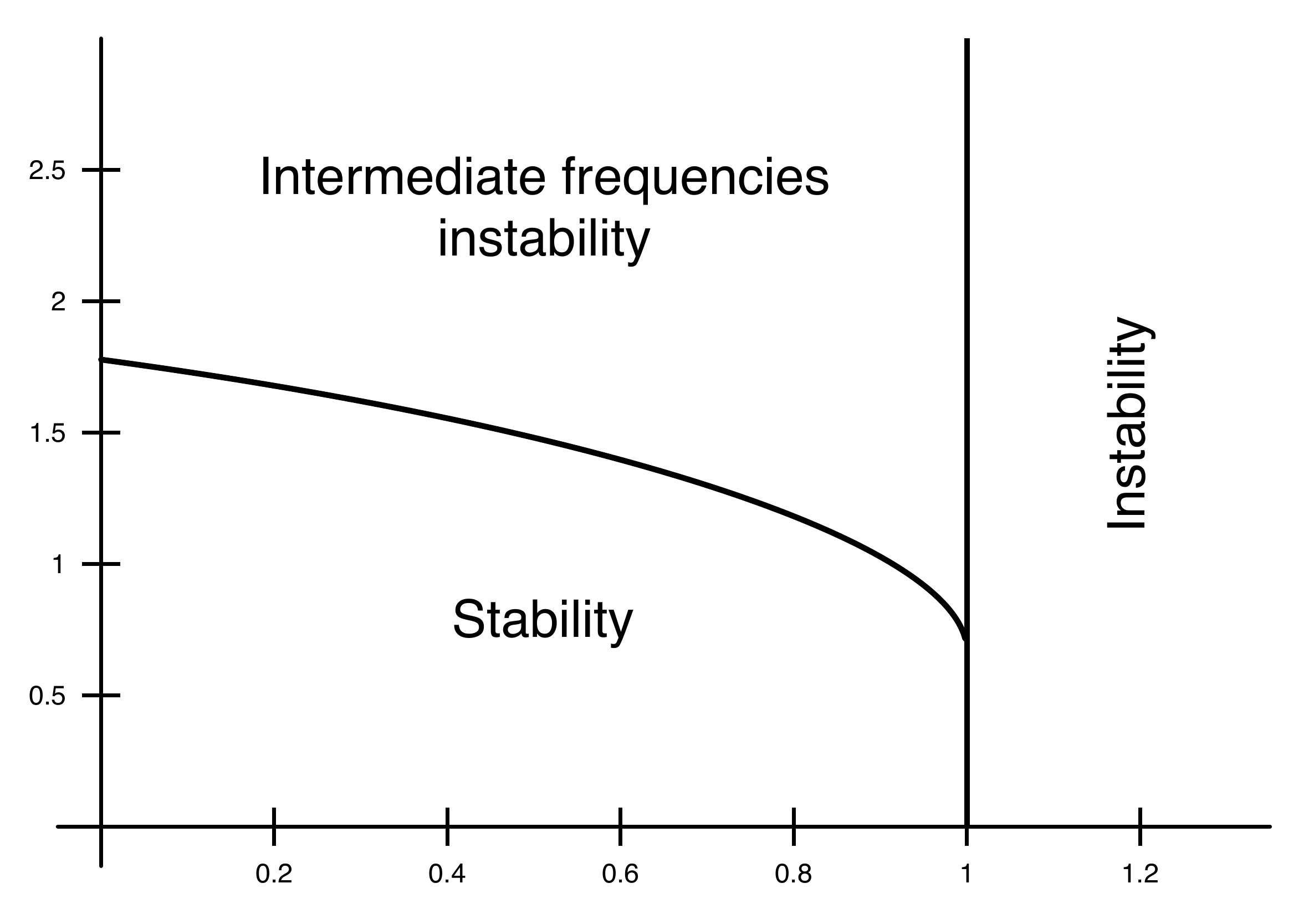}
\caption{\footnotesize Case $k=2$.
The stability character of the model In the plane $(\eta_2,\eta_3)$ (where $\eta_j=\theta_j/\theta_1$).}
\label{fig:stabregions1}
\end{figure}

Theorem \ref{thm:k=2} indicates the existence of a parameter region where the instability
is of ``Turing type'', in the sense that large and small frequencies are damped by the equation
while a window of intermediate frequencies exhibit exponential growth in time. 
In this respect, this paper intends to contribute to the strand of the analysis of instabilities
driven by the presence of memory (see \cite{ODRK86, DuffFreiGrin02} and descendants).

The organization of the paper is the following.
Section \ref{sec:chaintrick} starts by discussing the linear chain trick for Gamma distributions.
Then, it concerns with the hyperbolicity of the system \eqref{main} in term of the coefficients $\theta_j$.
In Section \ref{sec:smalllarge}, we analyze the dispersion relation of the system dividing the
small and large frequencies regimes, giving the complete proofs of 
Theorems \ref{thm:smalllarge} and 
Finally, in Section \ref{sec:inter}, we analyze intermediate frequencies
giving the proof of Theorem \ref{thm:anyk} and then concentrating on
the case $k=2$, providing the proof of Theorem \ref{thm:k=2}.

\section{Linear chain trick and hyperbolicity}\label{sec:chaintrick}

For the reader's convenience, following closely \cite{Smit11}, this Section starts with the
presentation of the {\it linear chain trick} and its use to reduce the diffusion equation \eqref{cons}
with the flux $v$ defined in \eqref{memo} to a system of partial differential equations,
in the case of memory kernels given by a linear combination of
Gamma functions \eqref{gammaktau}.

Fixed the scale $\tau$ (normalized to 1, without loss of generality) and given $T>0$,
let us consider the linear functionals
\begin{equation}\label{trasfk}
	\phi\;\mapsto\;\psi_{j}=\mathcal{T}_{j}\phi
	\qquad\textrm{where}\quad
	\psi_{j}(t):=\int_{-\infty}^{t} g_j(t-s)\phi(s)\,ds,
\end{equation}
defined for bounded functions $\phi\in C((-\infty,T];{\mathbb{R}})$.

\begin{proposition}\label{prop:lintrick}
Let $T\in{\mathbb{R}}$.
Given $\psi_0\in C((-\infty,T];{\mathbb{R}})$, the system of ordinary differential equations
for the unknowns $\psi_j=\psi_j(t)$
\begin{equation*}
	\frac{d\psi_j}{dt}=\psi_{j-1}-\psi_{j}
	\qquad j=1,2,\dots
\end{equation*}
has a unique bounded solution $\psi_1,\psi_2,\dots$.
Such solution is given by $\psi_j=\mathcal{T}_{j}\psi_0$ for any $j=1,2,\dots$,
with $\mathcal{T}_j$ defined in \eqref{trasfk}.
\end{proposition}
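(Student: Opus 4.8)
The plan is to verify by direct computation that $\psi_j=\mathcal{T}_j\psi_0$ solves the system (existence), to note that this family is uniformly bounded, and then to obtain uniqueness from a Duhamel representation combined with an induction on $j$. Everything hinges on the elementary differentiation rule for the Gamma kernels, which follows at once from \eqref{gammaktau} (with $\tau=1$):
\[
	g_1(0)=1,\quad g_j(0)=0\ \ (j\ge2),\qquad \frac{dg_1}{dt}=-g_1,\quad \frac{dg_j}{dt}=g_{j-1}-g_j\ \ (j\ge2),
\]
i.e.\ the scalar form of the matrix relation $\tau\,d\mathbf{g}/dt+(\mathbb{I}_{k+1}-\mathbb{N}_{k+1})\mathbf{g}=0$ recorded in the introduction.

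For existence I would first observe that $\psi_j:=\mathcal{T}_j\psi_0$ is well defined whenever $\psi_0$ is bounded and continuous, since the substitution $\sigma=t-s$ turns the integral in \eqref{trasfk} into $\int_0^\infty g_j(\sigma)\,\psi_0(t-\sigma)\,d\sigma$ with $g_j\in L^1(0,\infty)$. Differentiating in $t$ by the Leibniz rule --- legitimate because $g_j$ and $\psi_0$ are continuous and $g_j$ together with its derivative decays exponentially, so the differentiated integral converges locally uniformly in $t$ --- gives
\[
	\frac{d\psi_j}{dt}(t)=g_j(0)\,\psi_0(t)+\int_{-\infty}^t \frac{dg_j}{dt}(t-s)\,\psi_0(s)\,ds.
\]
Substituting the identities above makes the boundary term vanish for $j\ge2$, yielding $d\psi_j/dt=\psi_{j-1}-\psi_j$, while for $j=1$ the boundary term equals exactly $\psi_0(t)$, so $d\psi_1/dt=\psi_0-\psi_1$. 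Hence the family $\psi_j=\mathcal{T}_j\psi_0$ solves the system, and since each $g(\cdot;j,\tau)$ is a probability density ($\int_0^\infty g_j=1$) it is bounded: $\|\psi_j\|_{L^\infty((-\infty,T])}\le\|\psi_0\|_{L^\infty((-\infty,T])}$ for every $j$.

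For uniqueness, let $(\psi_j)$ and $(\tilde\psi_j)$ be two bounded solutions sharing the same $\psi_0$ and set $w_j:=\psi_j-\tilde\psi_j$, so that $w_0\equiv0$ and $w_j'+w_j=w_{j-1}$. Variation of constants on $[t_0,t]\subset(-\infty,T]$ gives $w_j(t)=e^{-(t-t_0)}w_j(t_0)+\int_{t_0}^t e^{-(t-s)}w_{j-1}(s)\,ds$; letting $t_0\to-\infty$ and using that $w_j$ is bounded while $e^{-(t-t_0)}\to0$, we obtain $w_j(t)=\int_{-\infty}^t e^{-(t-s)}w_{j-1}(s)\,ds$. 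Starting from $w_0\equiv0$, an induction on $j$ forces $w_j\equiv0$ for all $j$, which is the claim.

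The only point requiring a little care is the differentiation under the integral sign at the moving endpoint $s=t$: it is precisely there that the distinction between $j=1$ (nonzero boundary term, which is what couples $\psi_1$ back to the datum $\psi_0$) and $j\ge2$ appears; the rest is a one-line verification and a routine induction. I would also mention a slicker alternative for existence: since the Gamma density of shape $j$ is the $j$-fold convolution of the exponential, $g_j=g_1^{\ast j}$ on $(0,\infty)$, one has $\mathcal{T}_j=(\mathcal{T}_1)^j$, so it suffices to check the case $j=1$, where $\psi_1=\mathcal{T}_1\psi_0$ is literally the Duhamel formula for $\psi_1'=\psi_0-\psi_1$ used in the uniqueness argument.
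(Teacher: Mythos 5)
Your proof is correct and follows essentially the same route as the paper: direct verification that $\psi_j=\mathcal{T}_j\psi_0$ solves the system via the Cauchy problems satisfied by the $g_j$, the bound $\|\psi_j\|_{L^\infty}\le\|\psi_0\|_{L^\infty}$ from $\int_0^\infty g_j=1$, and uniqueness by induction using boundedness on $(-\infty,T]$. Your Duhamel argument with $t_0\to-\infty$ merely makes explicit the step the paper dismisses as ``$\Psi_1$ is clearly zero,'' which is a welcome but not substantively different addition.
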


\begin{proof}
For $j\geq 2$, integrating by parts, we infer
\begin{equation*}
		I_j:=\int_{0}^{\infty} g_j (t)\,dt=\frac{1}{(j-1)!} \int_{0}^{\infty} s^{j-1}\,e^{-s}\,ds
			=\frac{1}{(j-2)!} \int_{0}^{\infty} s^{j-2}\,e^{-s}\,ds=I_{j-1}.
\end{equation*}
Thus, since $I_1=1$, there holds $I_j=1$ for any $j$.
Therefore we infer
\begin{equation*}
	\left|\mathcal{T}_{j}\phi\right|\leq \int_{-\infty}^{t} g_j(t-s)|\phi(s)|\,ds
		\leq \int_{0}^{\infty} g_j(s)\,ds\,|\phi|_{{}_{L^\infty}} =|\phi|_{{}_{L^\infty}}
\end{equation*}
which gives
\begin{equation*}
	\left|\mathcal{T}_{j}\phi\right|_{{}_{L^\infty}}\leq |\phi|_{{}_{L^\infty}}.
\end{equation*}
Moreover, each $g_j$ solves an appropriate Cauchy problem, viz.
\begin{equation*}
	\left\{\begin{aligned} \frac{dg_1}{dt}&=-g_1,\\ g_1(0)&=1 \end{aligned}\right.
	\qquad\textrm{and}\qquad
	\left\{\begin{aligned} \frac{dg_j}{dt}&=g_{j-1}-g_j,\\ g_j(0)&=0, \end{aligned}\right.
	\quad j=2,3,\dots.
\end{equation*}
Then, differentiating the definition $\psi_1:=\mathcal{T}_{1}\psi_0$ we obtain
\begin{equation*}
	\begin{aligned}
	\frac{d\psi_1}{dt}
		&=g_1(0)\psi_0(t)+\int_{-\infty}^{t} \partial_t g_1(t-s)\psi_0(s)\,ds\\
		&=\psi_0(t)-\int_{-\infty}^{t} g_1(t-s)\psi_0(s)\,ds=\psi_0(t)-\psi_1(t).
	\end{aligned}
\end{equation*}
Similarly, differentiating  $\psi_j:=\mathcal{T}_j\psi_0$ for $j\geq 2$, we infer
\begin{equation*}
	\begin{aligned}
	\frac{d\psi_j}{dt}&=g_j(0)\psi_0(t)+\int_{-\infty}^{t} \partial_t g_j(t-s)\psi_0(s)\,ds\\
		&=\int_{-\infty}^{t} \bigl\{g_{j-1}(t-s)-g_j(t-s)\bigr\}\psi_0(s)\,ds=\psi_{j-1}-\psi_{j}.
	\end{aligned}
\end{equation*}
To conclude the proof, it has to be shown that the problem has at most one solution,
viz. the one just exhibited.
Assuming that $\psi_j^1$ and $\psi_j^2$ both solve the prescribed linear system,
the difference $\Psi_j:=\psi_j^1-\psi_j^2$ is a bounded solution to
\begin{equation*}
	\frac{d\Psi_1}{dt}=-\Psi_{1},\quad
	\frac{d\Psi_j}{dt}=\Psi_{j-1}-\Psi_{j}\quad j=2,\dots.
\end{equation*}
Then, $\Psi_1$ is clearly zero and, by induction, also $\Psi_j$ is zero for any $j\geq 2$.
\end{proof}

Thanks to Proposition \ref{prop:lintrick}, the non-local problem
\begin{equation*}
	\partial_t u + \partial_x v=0\qquad\textrm{with}\quad
	v(x,t):=-\int_{-\infty}^{t} g(t-s)\partial_x u(x,s)\,ds
\end{equation*}
can be reduced to a system of partial differential equations when the memory kernel
$g$ has the form \eqref{kernelform} for some natural $k$ and 
$\boldsymbol{\theta}\in{\mathbb{R}}^{k+1}$.
Indeed, there holds
\begin{equation*}
	v=-\sum_{j=1}^{k+1} \theta_j \mathcal{T}_j(\partial_x u)=-\sum_{j=1}^{k+1} \theta_j \psi_j,
\end{equation*}
so that the problem is equivalent to the system
\begin{equation}\label{gensys}
	\left\{\begin{aligned}
		&\partial_t u + \theta_{1} \partial_x\psi_{1} + \dots +\theta_{k+1} \partial_x\psi_{k+1} = 0,\\
		&\partial_t \psi_1 + \partial_x u + \psi_1 = 0 ,\\
		&\partial_t \psi_j 	- \psi_{j-1} + \psi_{j} = 0 \qquad\quad j=2,\dots,n+1
	\end{aligned}\right.
\end{equation}
System \eqref{gensys} can be rewritten in vectorial form as
\begin{equation}\label{vector}
	\mathbb{A}_0 \partial_t W + \mathbb{A}_1 \partial_x W + \mathbb{B} W = 0,
\end{equation}
where $W=(u,\psi_1,\dots,\psi_{k+1})$ and 
\begin{equation}\label{A0A1}
	\mathbb{A}_0=\begin{pmatrix}
		1			& 0 			& 0_{k\times 1} \\
		0 			& 1			& 0_{k\times 1} \\
		0_{1\times k} 	& 0_{1\times k} 	& \mathbb{I}_{k}
	\end{pmatrix},
		\qquad
	\mathbb{A}_1=\begin{pmatrix}
		0			& \theta_1		& \theta_\ast \\
		1			& 0	 		& 0_{k\times 1} \\
		0_{1\times k} 	& 0_{1\times k} & 0_{k\times k}
	\end{pmatrix}
\end{equation}
where $\theta_\ast=(\theta_2,\dots,\theta_{k+1})$ and
\begin{equation}\label{B}
	\mathbb{B}=\begin{pmatrix}
		0 & 0 & 0 & \dots & 0 \\
		0 & 1 & 0 & \dots & 0 \\
		0 & -1 & 1  & \dots & 0 \\
		\vdots & \vdots & \vdots & \ddots & \vdots \\
		0 & 0 & 0 & \dots & 1 \\
	\end{pmatrix}
\end{equation}
The hyperbolicity character of the system is recognized analyzing the principal term
in \eqref{vector} and, specifically, the spectral properties of the matrix
$\mathbb{A}_0^{-1}\mathbb{A}_1$.

\begin{lemma}\label{lem:hyperb}
The matrix $\mathbb{A}_0^{-1}\mathbb{A}_1$ has real eigenvalues if and only if $\theta_1\geq 0$.
Moreover, the matrix $\mathbb{A}_0^{-1}\mathbb{A}_1$\par
{\bf i.} if $\theta_1=0$, has a single eigenvalue $0$ with algebraic multiplicity equal to $n+2$
and geometric multiplicity equal to $n$;\par
{\bf ii.} if $\theta_1>0$, has three eigenvalues: $\pm\sqrt{\theta_1}$ (each with multiplicity equal to 1)
and $0$ (with algebraic and geometric multiplicity equal to $k$).
\end{lemma}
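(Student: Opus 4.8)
The plan is to reduce everything to the spectral analysis of one explicit sparse matrix. First I would note that $\mathbb{A}_0$ in \eqref{A0A1} is nothing but the identity $\mathbb{I}_{k+2}$ (block-diagonal with diagonal blocks $1$, $1$, $\mathbb{I}_{k}$), so that $\mathbb{A}_0^{-1}\mathbb{A}_1=\mathbb{A}_1$ and the question concerns the matrix $\mathbb{A}_1$ alone, whose only nonzero rows are the first two: the first row is $(0,\theta_1,\theta_\ast)=(0,\theta_1,\theta_2,\dots,\theta_{k+1})$ and the second is $\mathbf{e}_1=(1,0,\dots,0)$.

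Next I would compute the characteristic polynomial of $\mathbb{A}_1$. In $\lambda\mathbb{I}_{k+2}-\mathbb{A}_1$ each of the rows $3,\dots,k+2$ has its only nonzero entry, equal to $\lambda$, on the diagonal; hence iterated Laplace expansion along the last row peels off one factor $\lambda$ at a time and simultaneously removes the entries $\theta_2,\dots,\theta_{k+1}$ from the first row, leaving
\[
	\det\bigl(\lambda\mathbb{I}_{k+2}-\mathbb{A}_1\bigr)
	=\lambda^{k}\det\begin{pmatrix}\lambda & -\theta_1\\ -1 & \lambda\end{pmatrix}
	=\lambda^{k}\bigl(\lambda^2-\theta_1\bigr).
\]
Thus the spectrum consists of $0$, with algebraic multiplicity at least $k$, together with the two roots of $\lambda^2=\theta_1$; these are real exactly when $\theta_1\geq 0$ and purely imaginary (nonreal) when $\theta_1<0$, which gives the first assertion.

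For the multiplicities I would solve $\mathbb{A}_1 W=\lambda W$ directly with $W=(u,\psi_1,\dots,\psi_{k+1})$: the equations read $\theta_1\psi_1+\theta_\ast\cdot(\psi_2,\dots,\psi_{k+1})=\lambda u$, $u=\lambda\psi_1$, and $0=\lambda\psi_j$ for $j=2,\dots,k+1$. If $\theta_1>0$ and $\lambda=\pm\sqrt{\theta_1}\neq 0$, the last relations force $\psi_2=\dots=\psi_{k+1}=0$, then $u=\lambda\psi_1$ with $\psi_1$ free and the first equation automatically satisfied, so each of $\pm\sqrt{\theta_1}$ is a simple eigenvalue; for $\lambda=0$ the eigenspace is $\{u=0\}\cap\{\theta_1\psi_1+\theta_\ast\cdot(\psi_2,\dots,\psi_{k+1})=0\}$, a single nontrivial linear constraint (nontrivial precisely because $\theta_1>0$) on $(\psi_1,\dots,\psi_{k+1})\in{\mathbb{R}}^{k+1}$, hence of dimension $k$, coinciding with the algebraic multiplicity, so $\mathbb{A}_1$ is diagonalizable — this is the (non strictly) hyperbolic case. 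If instead $\theta_1=0$, the only eigenvalue is $0$ with algebraic multiplicity $k+2$, while its eigenspace is cut out by $u=0$ and $\theta_\ast\cdot(\psi_2,\dots,\psi_{k+1})=0$ with $\psi_1$ free, of dimension $k$ provided $\theta_\ast\neq 0$ (i.e.\ the kernel genuinely involves $g_{k+1}$, the standing assumption); the gap $k<k+2$ signals the presence of Jordan blocks and the loss of hyperbolicity.

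I do not expect a genuine obstacle here: the whole argument is a finite, explicit linear-algebra computation. The only points requiring care are bookkeeping of the block sizes in \eqref{A0A1}–\eqref{B} while expanding the determinant, and, in part {\bf i.}, recording the mild nondegeneracy $\theta_\ast\neq 0$, without which $\ker\mathbb{A}_1$ would have dimension $k+1$ instead of $k$.
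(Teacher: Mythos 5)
Your argument is correct and follows essentially the same route as the paper: reduce to $\mathbb{A}_1$ (since $\mathbb{A}_0=\mathbb{I}_{k+2}$), expand the determinant to get the characteristic polynomial $\lambda^{k}(\lambda^2-\theta_1)$, and read off the geometric multiplicity of $0$ from the two linear constraints $u=0$ and $\theta_1\psi_1+\theta_\ast\cdot(\psi_2,\dots,\psi_{k+1})=0$. Your explicit remark that part \textbf{i.} needs the nondegeneracy $\theta_\ast\neq 0$ is a small point of care that the paper's proof glosses over, but otherwise the two proofs coincide.
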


\begin{proof}
The eigenvalue problem is $(\mathbb{A}_1-\lambda \mathbb{A}_0)W=0$.
Therefore, since
\begin{equation*}
	\det(\mathbb{A}_1-\lambda \mathbb{A}_0)
		=(-\lambda)^{k}\det\begin{pmatrix} -\lambda & \theta_1 \\ 1 & -\lambda \end{pmatrix}
		=(-\lambda)^{k}(\lambda^2-\theta_1),
\end{equation*}
the statement on the eigenvalues and the algebraic multiplicity holds.
Moreover, for $\lambda=0$, the eigenvalue problem reduces to the conditions
\begin{equation*}
	 u = 0,\qquad \sum_{k=1}^{k+1} \theta_k \psi_k = 0,
\end{equation*}
which define a vector space of dimension $k$ in ${\mathbb{R}}^{k+2}$.
\end{proof}

As a consequence of Lemma \ref{lem:hyperb}, it is reasonable to concentrate the attention
on the case $\theta_1>0$, since only in such a case the principal part of the system is
diagonalizable and thus the Cauchy problem is well-posed.

A diagonalizing matrix can be determined by computing the $k+2$ independent eigenvectors.
Since the eigenvalue problem reads as
\begin{equation*}
		-\lambda u + \sum_{j=1}^{k+1} \theta_j  \psi_j = 0,\qquad
		-\lambda \psi_1 + u = 0 ,\qquad
		\lambda \psi_j 	= 0 \quad (j=2,\dots,k+1),
\end{equation*}
a choice of eigenvectors is
\begin{equation*}
	\begin{aligned}
	\pm\sqrt{\theta_1} :\quad & u=1,\quad \psi_1=\pm 1/\sqrt{\theta_1}, &\quad \psi_j&=0,\\
	0 :\quad & u=0,\quad	\psi_1=-\theta_i,	&\quad \psi_j&=\theta_1\delta_{ij}\quad (i=2,\dots,n+1)
	\end{aligned}
\end{equation*}
where $\delta_{ij}$ is the Kronecker symbol.
Such eigenvectors, considered as column vectors, define a change of coordinates $\mathbb{C}$
such that, setting $Z=\mathbb{C}^{-1} W$, the principal part of the system \eqref{vector} is diagonalized
\begin{equation*}
	\partial_t Z + \mathbb{D}\,\partial_x Z + \mathbb{E}\,Z = 0,
\end{equation*}
where $\mathbb{D}:={\textrm{diag}}(-\sqrt{\theta_1},\sqrt{\theta_1},0,\dots,0)$ and
$\mathbb{E}:=\mathbb{C}^{-1}\mathbb{A}_0^{-1}\mathbb{B}\,\mathbb{C}$.

\section{Small and large frequencies analysis}\label{sec:smalllarge}

Applying Fourier--Laplace transform, system \eqref{vector} is converted into the system
\begin{equation*}
	\bigl(\lambda\mathbb{A}_0  + \mu \mathbb{A}_1 + \mathbb{B} \bigr) W = 0,
\end{equation*}
which possesses non-trivial solution if and only if the {\it dispersion relation}
\begin{equation}\label{dispersion}
	p(\lambda,\mu):=\det\bigl(\lambda\mathbb{A}_0  + \mu \mathbb{A}_1 + \mathbb{B} \bigr)=0
\end{equation}
is satisfied.
Such relation can be read in different ways.
Here, following the standard approach dictated by the Fourier transform, the variable
$\mu$ will be considered as a purely imaginary number, i.e. $\mu=i\xi$ with $\xi\in{\mathbb{R}}$,
and the attention is directed toward the corresponding values of $\lambda\in{\mathbb{C}}$ such that
\eqref{dispersion} is satisfied.

Thanks to the very special structure of the matrices defining the polynomial $p$,
it is possible to determine an explicit formula for it.

\begin{lemma}\label{lem:disprel}
Let $\mathbb{A}_0, \mathbb{A}_1$ and $\mathbb{B}$ as in \eqref{A0A1}--\eqref{B}.
Then, there holds
\begin{equation}\label{disprel}
	p(\lambda,\mu)=\lambda(\lambda+1)^{k+1}
		-\mu^2\sum_{j=0}^{k} \theta_{k+1-j}(\lambda+1)^{j}.
\end{equation}
\end{lemma}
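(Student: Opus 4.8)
The plan is to compute the determinant $p(\lambda,\mu)=\det\bigl(\lambda\mathbb{A}_0+\mu\mathbb{A}_1+\mathbb{B}\bigr)$ directly by exploiting the block/banded structure of the three matrices. Writing $M:=\lambda\mathbb{A}_0+\mu\mathbb{A}_1+\mathbb{B}$ and ordering the unknowns as $(u,\psi_1,\dots,\psi_{k+1})$, the matrix $M$ has the shape
\begin{equation*}
	M=\begin{pmatrix}
		\lambda		& \mu\theta_1	& \mu\theta_2 & \mu\theta_3 & \cdots & \mu\theta_{k+1}\\
		\mu			& \lambda+1		& 0 & 0 & \cdots & 0\\
		0			& -1			& \lambda+1 & 0 & \cdots & 0\\
		0			& 0				& -1 & \lambda+1 & \cdots & 0\\
		\vdots		& \vdots			& & \ddots & \ddots & \vdots\\
		0			& 0				& 0 & \cdots & -1 & \lambda+1
	\end{pmatrix},
\end{equation*}
i.e. the lower-right $(k+1)\times(k+1)$ block is lower bidiagonal with $\lambda+1$ on the diagonal and $-1$ on the subdiagonal, the first column below the corner entry is $(\mu,0,\dots,0)^{T}$, and the first row to the right of the corner is $\mu\,\theta_\ast$ augmented by $\mu\theta_1$.

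The first step is to expand along the first column: only the $(1,1)$ entry $\lambda$ and the $(2,1)$ entry $\mu$ contribute. The minor of the $(1,1)$ entry is the determinant of the lower bidiagonal $(k+1)\times(k+1)$ block, which is $(\lambda+1)^{k+1}$. For the $(2,1)$ cofactor I delete row $2$ and column $1$; the resulting $(k+1)\times(k+1)$ matrix is upper triangular \emph{except} for its first row $(\mu\theta_1,\mu\theta_2,\dots,\mu\theta_{k+1})$, with diagonal entries $(\lambda+1)$ in positions $2,\dots,k+1$ and with $-1$'s just below the diagonal in that lower block. Expanding this cofactor along its own first column (again only two entries survive: $\mu\theta_1$ times $(\lambda+1)^{k}$ and a $-1$ in the next row) and iterating — equivalently, doing one more column expansion and recognizing the same pattern with $k$ replaced by $k-1$ — yields a telescoping recursion whose solution is $\sum_{j=0}^{k}\theta_{k+1-j}(\lambda+1)^{j}$ for that cofactor. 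Combining with the overall sign $(-1)^{2+1}$ from the $(2,1)$ position and the leading factor $\mu$ gives
\begin{equation*}
	p(\lambda,\mu)=\lambda(\lambda+1)^{k+1}-\mu^{2}\sum_{j=0}^{k}\theta_{k+1-j}(\lambda+1)^{j},
\end{equation*}
which is exactly \eqref{disprel}.

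The only real bookkeeping obstacle is pinning down the coefficient pattern $\theta_{k+1-j}(\lambda+1)^{j}$ in the cofactor, i.e. verifying that successive column expansions of the ``almost upper triangular'' block pick up exactly one more $\theta$ with the exponent on $(\lambda+1)$ decreasing by one each time and that all signs and the $\mu^2$ factor come out correctly. To make this airtight I would set $D_k$ for the cofactor determinant in the size-$k$ case, show $D_k=\theta_1(\lambda+1)^{k}-D_{k-1}^{\,\mathrm{shift}}$ after one expansion — more cleanly, prove by induction on $k$ that the cofactor equals $\sum_{j=0}^{k}\theta_{k+1-j}(\lambda+1)^{j}$, with the base case $k=0$ giving simply $\mu\theta_1$ and hence $p=\lambda(\lambda+1)-\mu^{2}\theta_1$ (the Cattaneo case), and the inductive step supplied by the single column expansion described above. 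Everything else is a routine Laplace expansion, so no genuine difficulty is anticipated beyond careful index management.
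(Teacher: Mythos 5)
Your proposal is correct and follows essentially the same route as the paper: a cofactor expansion of $\det\bigl(\lambda\mathbb{A}_0+\mu\mathbb{A}_1+\mathbb{B}\bigr)$ evaluated by induction (the paper recurses on the full determinant via $p_k=(\lambda+1)p_{k-1}-\theta_{k+1}\mu^2$, peeling off the last Gamma component, while you expand along the first column and recurse on the remaining Hessenberg cofactor, peeling off $\theta_1$; both land on \eqref{disprel}). One small slip in your informal aside: the recursion $D_k=\theta_1(\lambda+1)^k-D_{k-1}^{\mathrm{shift}}$ should carry a plus sign, since the $-1$ entry in position $(2,1)$ combines with the cofactor sign $(-1)^{2+1}$ to give $+1$, which is consistent with the all-positive closed form $\sum_{j=0}^{k}\theta_{k+1-j}(\lambda+1)^{j}$ that you correctly state and prove by induction.
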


\begin{proof}
The formula can be proved by induction on $k$.
Indeed, for $k=0$ the relation 
\begin{equation*}
	p(\lambda,\mu)=\lambda(\lambda+1)-\theta_1\mu^2
\end{equation*}
holds, as can be seen by direct computation.

Then, making explicit the dependence on $n$ in the dispersion relation
denoting the polynomial by $p_k$, there holds
\begin{equation*}
	\begin{aligned}
	p_{k}(\lambda,\mu)&=(\lambda+1)p_{k-1}(\lambda,\mu)
		+(-1)^{k+1}\theta_{k+1}\mu
		\det\begin{pmatrix}
			\mu 		& \lambda+1	& \dots 	& 0	\\
			0 		& -1  			& \dots 	& 0	\\
			\vdots	& \vdots			& \ddots 	& \lambda +1 \\
			0		& 0 				& 0		& -1 \\
			\end{pmatrix}\\
		&=(\lambda+1)p_{k-1}(\lambda,\mu)-\theta_{k+1}\mu^2.
	\end{aligned}
\end{equation*}
Then, assuming \eqref{disprel} for $k-1$, we infer
\begin{equation*}
	\begin{aligned}
	p_{k}(\lambda,\mu) &=(\lambda+1)p_{k-1}(\lambda,\mu)-\theta_{k+1}\mu^2 \\
		& =\lambda(\lambda+1)^{k+1}
		-\mu^2\sum_{j=1}^{k} \theta_j(\lambda+1)^{k+1-j}
		-\theta_{k+1}\mu^2 \\
		& =\lambda(\lambda+1)^{k+1}
		-\mu^2\sum_{j=1}^{k+1} \theta_j(\lambda+1)^{k+1-j}.
	\end{aligned}	
\end{equation*}
The proof is complete.
\end{proof}

The analysis of the relation $p(\lambda,\mu)=0$ gives a complete information on the 
stability properties in the model. 
First of all, the behavior for small values of $\mu$, corresponding to the large time-behavior,
in case of stability, is very simple.
Indeed, setting $\mu=0$, there holds
\begin{equation*}
	p(\lambda,0)=\lambda(\lambda+1)^{k+1}.
\end{equation*}
Thus, there are $k+1$ modes that are dissipated with exponential rate (corresponding 
to the zero $-1$ with order $k+1$) and there is a single mode with a weaker decay.

In order to explore in details the slow-mode, relative to the value $\lambda=0$,
the expansion $\lambda=A\mu+B\mu^2+o(\mu^2)$
can be plugged into the dispersion relation, obtaining
\begin{equation*}
	\begin{aligned}
	0=p(\lambda,\mu)
	&=\bigl\{A\mu+B\mu^2+o(\mu^2)\bigr\}\bigl\{1+A\mu+o(\mu)\bigr\}^{k+1}
		-\mu^2\sum_{j=0}^{k} \theta_{k+1-j}+o(\mu^2)\\
	&=\bigl\{A\mu+B\mu^2+o(\mu^2)\bigr\}\bigl\{1+(k+1)A\mu+o(\mu)\bigr\}
		-\left(\boldsymbol{\theta}\cdot \mathbf{1}\right)\mu^2+o(\mu^2)\\
	\end{aligned}
\end{equation*}
that is
\begin{equation*}	
	0=p(\lambda,\mu)
	=A\mu+\Bigl\{(k+1)A^2+B-\left(\boldsymbol{\theta}\cdot \mathbf{1}\right)\Bigr\}\mu^2+o(\mu^2).
\end{equation*}
Thus, in a neighborhood of $(\lambda,\mu)\approx (0,0)$, the dispersion relation defines the curve
\begin{equation*}
	\lambda=\left(\boldsymbol{\theta}\cdot \mathbf{1}\right)\mu^2+o(\mu^2)
\end{equation*}
This completes the proof of Theorem \ref{thm:smalllarge}, part {\bf I}.

Next, we investigate the behavior of the couples $(\lambda,\mu)$ satisfying the dispersion
relation, in the regime $\mu\to\infty$.
Keeping track of the highest powers of $\lambda$ and $\mu$ gives
\begin{equation*}
	p(\lambda,\mu)=(\lambda^{2}-\theta_1\mu^2)\lambda^{k}+\dots 
\end{equation*}
Thus, as $\mu\to\infty$, setting $\kappa:=\sqrt{{\theta_1}}$,
the dispersion relation defines a set separated into three branches,
given by the asymptotics
\begin{equation*}
	\lambda=\pm \kappa\,\mu+o(\mu),\qquad \lambda=o(\mu),
\end{equation*}
which corresponds to the eigenvalues of the principal part of the system \eqref{vector}.

The stability properties of such branches are encoded in the subsequent term in the asymptotic
expansions for  $\lambda$ as a function of $\mu$.
Setting $\lambda=\pm \kappa\,\mu+C+o(1)$ with $C$ to be determined,
and plugging into the dispersion relation, one obtains
\begin{equation*}
	\begin{aligned}
	0&=\bigl\{\pm \kappa\,\mu+C+o(1)\bigr\}\bigl\{\pm\kappa\,\mu+C+1+o(1)\bigr\}^{k+1}
		-\mu^2\theta_{2}\bigl\{\pm\kappa\,\mu+C+1+o(1)\bigr\}^{k-1}\\
	&\hskip6.5cm -\mu^2\theta_{1}\bigl\{\pm\kappa\,\mu+C+1+o(1)\bigr\}^{k}+o(\mu^{k+1})
	\end{aligned}
\end{equation*}
which reduces to
\begin{equation*}
	\begin{aligned}
	0&=(\pm \kappa)^{k-1}\Bigl\{\Bigl((n+2)\kappa^2-k\theta_1\Bigr)C
		+(k+1)\kappa^2-\theta_2-k\theta_1\Bigr\}\mu^{k+1}+o(\mu^{k+1})\\
		&=(\pm \kappa)^{k-1}\bigl(2\theta_1\,C+\theta_1-\theta_2\bigr)\mu^{k+1}+o(\mu^{k+1})
	\end{aligned}
\end{equation*}
so that
\begin{equation*}
	C=-\frac{\theta_1-\theta_2}{2\theta_1}.
\end{equation*}
Similarly, choosing $\lambda=C+o(1)$
and inserting in the dispersion relation, one infers
\begin{equation*}
		-\mu^2\sum_{j=0}^{k} \theta_{k+1-j}(C+1)^{j}+o(\mu^2)=0
\end{equation*}
so that the coefficient $C$ satisfies the algebraic relation
\begin{equation*}
		\sum_{j=0}^{k} \theta_{k+1-j}(C+1)^{j}=0.
\end{equation*}
Such equality can be rewritten as
\begin{equation*}
	\sum_{j=0}^{k}\sum_{\ell=0}^{j}  \binom{j}{\ell}\theta_{k+1-j} C^\ell=0.
\end{equation*}
Exchanging the order of the sums, it follows that $C$ is root of the polynomial
\begin{equation*}
	Q(x)=\sum_{\ell=0}^{k} q_\ell x^\ell\qquad\textrm{where}\quad
	q_\ell:=\sum_{j=\ell}^{k}\binom{j}{\ell}\theta_{k+1-j}.
\end{equation*}
This completes the proof of Theorem \ref{thm:smalllarge}, part {\bf II}.

Rephrasing, necessary and sufficient conditions for stability in the high frequency regime
are $\theta_2<\theta_1$ and the request that the polynomial $Q$ is
a {\it (strict) Hurwitz polynomial}, i.e. a polynomial whose roots have strictly negative real parts.
Verification of the latter condition can be done by using the {\it Routh--Hurwitz stability condition}.
Let us consider some explicative examples, when $\theta_1+\dots+\theta_{k+1}=1$.
We list the explicit form of the polynomial $Q$ and the corresponding
Routh--Hurwitz array for the first integers and for specific choices of
the parameters $\theta_j$:\\
for $k=2$ and general $\theta_j$,
\begin{equation*}
	Q(x)=1+(1+\theta_1) x+\theta_1 x^2
	\qquad\qquad
	\begin{tabular}{@{}c|c|c@{}}  
		$\theta_1$	& 1	& 0	\\ \hline
		$1+\theta_1$	& 0	&	\\ \hline
		1			&	& 
	\end{tabular}
\end{equation*}
for $k=3$, with $\theta_2=\theta_3=0$,
\begin{equation*}
	Q(x)=1+3\theta_1 x+3\theta_1 x^2+\theta_1 x^3
	\qquad\qquad
	\begin{tabular}{@{}c|c|c|c@{}}  
		$\theta_1$		& $3\theta_1$ 	& 0	& 0	\\ \hline
		$3\theta_1$		& 1			& 0	&	\\ \hline 
		$3(\theta_1-1/9)$	& 0			&	&	\\ \hline
		1 				& 			& 	&
	\end{tabular}
\end{equation*}
for $k=4$, with $\theta_2=\theta_3=\theta_4=0$,
\begin{equation*}
	Q(x)=1+4\theta_1 x+6\theta_1 x^2+4\theta_1 x^3+\theta_1 x^4
	\qquad\qquad
	\begin{tabular}{@{}c|c|c|c|c@{}}  
		$\theta_1$		& $6\theta_1$ 	& 1	& 0	&	0	\\ \hline
		$4\theta_1$		& $4\theta_1$	& 0	& 0	\\ \hline 
		$5\theta_1$		& 1			& 0	&	\\ \hline
		$4(\theta_1-1/5)$ 	& 0			& 	&	\\ \hline
		1			 	& 			& 	&
	\end{tabular}
\end{equation*}
In particular, large frequency stability holds for $k=2$ (any case), $k=3$ and $\theta_1>1/9$, $k=4$
and $\theta_1>1/5$.

For higher values of $k$, the necessary and sufficient conditions appear difficult to be translated
in a simple condition that is directly readable from the values of the coefficients $\theta_k$.
Nevertheless, for $\theta_1$ sufficiently large with respect to the other coefficients $\theta_j$,
$j=2,\dots,k+1$, the stability condition is satisfied.

\begin{corollary}\label{coro:suffhighstab}
Let $k\geq 2$ and let $\theta_1,\dots,\theta_{k+1}$ be such that
\begin{equation}\label{bigtheta1}
	 \sum_{j=2}^{k+1} \theta_{j}\leq \theta_1.
\end{equation}
Then the system \eqref{gensys} is stable in the high-frequency regime.
\end{corollary}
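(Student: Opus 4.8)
\emph{Plan.} By Theorem~\ref{thm:smalllarge}, part~\textbf{II}, ``stability in the high-frequency regime'' is equivalent to the two requirements $\theta_2<\theta_1$ and ``$Q$ is a strict Hurwitz polynomial''. The first is immediate from \eqref{bigtheta1}, since $\theta_2\le\sum_{j=2}^{k+1}\theta_j\le\theta_1$ (with the understanding that, exactly as in Theorem~\ref{thm:anyk}, the interesting case is the strict one $\sum_{j=2}^{k+1}\theta_j<\theta_1$, which forces $\theta_2<\theta_1$). So the whole content of the corollary is the claim that, whenever $\theta_1>0$, $\theta_j\ge0$ and $\sum_{j=2}^{k+1}\theta_j\le\theta_1$, all roots of the polynomial $Q$ from \eqref{HFpoly} have strictly negative real part.

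The key is the factored form of $Q$ produced along the computation preceding Corollary~\ref{coro:suffhighstab}, namely
\[
	Q(x)=\sum_{j=0}^{k}\theta_{k+1-j}(x+1)^{j}=\sum_{i=1}^{k+1}\theta_i\,(x+1)^{k+1-i}.
\]
Performing the shift $y=x+1$ transforms $Q$ into $\widetilde{Q}(y):=\theta_1 y^{k}+\theta_2 y^{k-1}+\dots+\theta_{k+1}$, a polynomial with a positive leading coefficient and nonnegative coefficients whose sum of the lower ones does not exceed the leading one. The claim ``$Q$ strictly Hurwitz'' becomes ``every root $y$ of $\widetilde{Q}$ satisfies $\mathrm{Re}\,y<1$'', a statement in the spirit of the Enestr\"om--Kakeya theorem but adapted to the half-plane $\{\mathrm{Re}\,y<1\}$ in place of the closed unit disc.

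To prove it I argue by contradiction: suppose $y$ is a root with $\mathrm{Re}\,y\ge1$. Then $y\neq0$ and $|y|\ge\mathrm{Re}\,y\ge1$, so $|y|^{1-j}\le1$ for every $j\ge2$. Dividing $\widetilde{Q}(y)=0$ by $y^{k}$ gives $\theta_1=-\sum_{j=2}^{k+1}\theta_j\,y^{1-j}$, whence, using $\theta_j\ge0$, the triangle inequality, the bound just noted, and \eqref{bigtheta1},
\[
	\theta_1\le\sum_{j=2}^{k+1}\theta_j\,|y|^{1-j}\le\sum_{j=2}^{k+1}\theta_j\le\theta_1,
\]
so all inequalities are equalities. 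If $\theta_2=\dots=\theta_{k+1}=0$ then $Q(x)=\theta_1(x+1)^{k}$ has its only root at $-1$ and there is nothing left to prove; otherwise the middle equality (a sum of nonnegative terms $\theta_j(1-|y|^{1-j})$ vanishing) forces $|y|=1$ for every $j$ with $\theta_j>0$, hence $|y|=1$, which together with $\mathrm{Re}\,y\ge1$ forces $y=1$, i.e.\ $x=0$; but $Q(0)=\widetilde{Q}(1)=\sum_{i=1}^{k+1}\theta_i\ge\theta_1>0$, so $0$ is not a root --- a contradiction. Therefore $\mathrm{Re}\,x<0$ for every root of $Q$, and Theorem~\ref{thm:smalllarge}.\textbf{II} gives the conclusion. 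I do not expect a genuine obstacle here; the only points that need care are keeping the chain of (in)equalities tight enough to pin down first $|y|=1$ and then $y=1$, and separately dispatching the degenerate case $\theta_2=\dots=\theta_{k+1}=0$ (and, at the borderline $\theta_2=\theta_1$, recalling that \eqref{eta2small} is a strict condition, so one really wants the strict version of \eqref{bigtheta1}).
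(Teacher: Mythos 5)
Your argument is correct, and its engine is the same as the paper's: rewrite $Q(x)=\sum_{j=0}^{k}\theta_{k+1-j}(1+x)^{j}$, divide by the leading term, and play the triangle inequality against \eqref{bigtheta1}. The surrounding logic differs, however. The paper runs a homotopy argument: for $\theta_2=\dots=\theta_{k+1}=0$ all roots sit at $-1$; it then shows that a purely imaginary root $x=iy$ would force $\theta_1\leq\sum_{j\geq 2}\theta_j$, so under \eqref{bigtheta1} no root can reach the imaginary axis, and it concludes (implicitly) by continuity of the roots as the $\theta_j$ vary. You instead exclude roots from the closed right half-plane directly, by collapsing the chain of inequalities when $|x+1|\geq 1$, pinning down first $|x+1|=1$ and then $x=0$, and discarding that point since $Q(0)=\sum_i\theta_i>0$. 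Your route buys two things: it avoids the continuity-of-roots step altogether, and it makes explicit the equality analysis that the non-strict hypothesis \eqref{bigtheta1} requires --- the paper's one-line conclusion ``no roots may intersect the imaginary axis'' silently needs the same argument (there, equality forces $(1+y^2)^{\ell/2}=1$, hence $y=0$, and $Q(0)>0$). You are also right to flag that Theorem \ref{thm:smalllarge}.\textbf{II} demands the strict inequality \eqref{eta2small} in addition to the Hurwitz property of $Q$: at the boundary point $\theta_2=\theta_1$, $\theta_3=\dots=\theta_{k+1}=0$ the polynomial $Q$ is still Hurwitz yet high-frequency stability fails, so the corollary really wants the strict form of \eqref{bigtheta1}; the paper's proof does not address this point at all.
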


\begin{proof}
The polynomial $Q$ can be rewritten as
\begin{equation*}
	Q(x)=\sum_{j=0}^{k} \theta_{k+1-j}(1+x)^{j}
\end{equation*}
In the case $\theta_j=0$ for any $j\neq 1$, the polynomial reduces to $(1+x)^k$
and, thus, it has a single root $x=-1$ with algebraic multiplicity equal to $k$.
Changing the values of $\theta_j$, the location of the roots changes smoothly.
Let $\theta_j$ be such that  there exists $y\in{\mathbb{R}}$ for which $Q(iy)=0$.
Then, there holds
\begin{equation*}
	\theta_1=-\sum_{j=0}^{k-1} \frac{\theta_{n+1-j}}{(1+iy)^{k-j}}
		=-\sum_{\ell=1}^{k} \frac{\theta_{j+1}}{(1+iy)^{\ell}}.
\end{equation*}
Then, we infer the estimate
\begin{equation*}
	\theta_1\leq \sum_{\ell=1}^{k} \frac{\theta_{j+1}}{|1+iy|^{\ell}}
		=\sum_{\ell=1}^{k} \frac{\theta_{j+1}}{(1+y^2)^{\ell/2}}
		\leq \sum_{\ell=1}^{k} \theta_{j+1}.
\end{equation*}
In particular, as soon as the condition \eqref{bigtheta1} is satisfied, 
no roots of the polynomial $Q$ may intersect the imaginary axis.
\end{proof}

As follows from the above analysis of the Routh--Hurwitz arrays, the above condition is not sharp.
In fact, in the case $k=3$, with $\theta_2=\theta_3=0$ and $\theta_4=1-\theta_1$, 
high frequency stability is equivalent to the requirement $\theta_1>1/9$, while
\eqref{bigtheta1} becomes $\theta_1>1/2$.
Similarly,  for $k=4$, with $\theta_2=\theta_3=\theta_4=0$ and $\theta_5=1-\theta_1$, 
high frequency stability is equivalent to $\theta_1>1/5$, while \eqref{bigtheta1} 
still corresponds to $\theta_1>1/2$.

\section{Intermediate frequency analysis}\label{sec:inter}

Next, we pass to the analysis of the dispersion relation \eqref{dispersion}
(with $p$ explicitly given in \eqref{disprel}) in the intermediate frequency regime,
under the assumption that the system is stable in the large frequency regime.
In particular, we assume $\theta_2<\theta_1$.

For $\lambda\neq -1$, the relation can be rewritten as
\begin{equation*}
	\lambda=\mu^2\sum_{j=1}^{k+1} \frac{\theta_{j}}{(\lambda+1)^{j}}
\end{equation*}
Let $(i\zeta,i\xi)$ with $\xi\neq 0$ be such that the above equality is satisfied;
then there hold
\begin{equation}\label{realimag}
	\sum_{j=1}^{k+1} \frac{\theta_j}{(1+\zeta^2)^{j}} {\textrm{Re}}\bigl((1-i\zeta)^{j}\bigr)=0,
	\qquad
	\zeta+\xi^2 \sum_{j=1}^{k+1} \frac{\theta_j}{(1+\zeta^2)^{j}}{\textrm{Im}}\bigl((1-i\zeta)^{j}\bigr)=0,
\end{equation}
so that, given $\zeta$ satisfying the first relation, $\xi$ is such that
\begin{equation}\label{xi2}
	\xi^2=-\left\{\sum_{j=1}^{k+1} \frac{\theta_j}{(1+\zeta^2)^{j}}
			{\textrm{Im}}\left(\frac{(1-i\zeta)^{j}}{\zeta}\right)\right\}^{-1}
\end{equation}
if and only if the term on the righthand side is positive.
Both the real part of $(1-i\zeta)^{j}$ and the imaginary part of $(1-i\zeta)^{j}/\zeta$
contain only even power of $\zeta$; 
hence the above relations can be rewritten in term of the single variable $s=\zeta^2$.

\begin{proof}[Proof of Theorem \ref{thm:anyk}]
We claim that if \eqref{condanyk} holds, then the first relation in \eqref{realimag} has no solutions.
Indeed, the first condition in \eqref{realimag} can be rewritten as
\begin{equation}\label{firstcond}
	\theta_1+\sum_{j=2}^{k+1} \frac{\theta_j}{(1+\zeta^2)^{j-1}} 
			{\textrm{Re}}\bigl((1-i\zeta)^{j}\bigr)=0
\end{equation}			
Since
\begin{equation*}
	\begin{aligned}
	\left|\sum_{j=2}^{k+1} \frac{\theta_j}{(1+\zeta^2)^{j-1}} 
			{\textrm{Re}}\bigl((1-i\zeta)^{k}\bigr)\right|
	&\leq \sum_{j=2}^{k+1} \frac{\theta_j}{|1+\zeta^2|^{j-1}} 
			\left|(1-i\zeta)^{j}\right|\\
%	&=\sum_{k=2}^{n+1} \frac{\theta_k}{|1+\zeta^2|^{k-1}} 
%			\left|1+\zeta^2\right|^{k/2}\\
	&=\sum_{j=2}^{k+1} \frac{\theta_j}{|1+\zeta^2|^{(j-2)/2}} 
		\leq \sum_{j=2}^{k+1} \theta_j,
	\end{aligned}
\end{equation*}	
under the condition \eqref{condanyk},
the relation \eqref{firstcond} cannot be satisfied.
\end{proof}

\subsection*{Combinations of the first three Gamma distributions}\label{sec:k=2}

Condition \eqref{condanyk}, stated in Theorem \ref{thm:anyk}, is sufficient, but not necessary.
Here, we restrict the attention to the case $k=2$, that is we concentrate on kernels $g$ of the form 
\begin{equation}\label{kernelk=2}
	g(t)=\left(\theta_1+\theta_2\,t+\tfrac{1}{2}\theta_3\,t^2\right)e^{-t}.
\end{equation}
with the aim of determining the sharp threshold
separating stability and instability.
Indeed, in this situation, it is possible to determine a complete characterization of the choices of the
coefficients $\theta_1, \theta_2, \theta_3$ giving raise to stable dynamics,
as encoded in Theorem \ref{thm:k=2}.

\begin{proof}[Proof of Theorem \ref{thm:k=2}]
Setting $s=\zeta^2$, the first equality in \eqref{realimag} becomes
\begin{equation*}
	\begin{aligned}
	0&=\theta_1(1+\zeta^2)^{2}{\textrm{Re}}\bigl(1-i\zeta\bigr)
		+\theta_2(1+\zeta^2){\textrm{Re}}\bigl((1-i\zeta)^{2}\bigr)
		+\theta_3{\textrm{Re}}\bigl((1-i\zeta)^{3}\bigr)\\
	&=\theta_1(1+s)^{2}+\theta_2(1+s)(1-s)+\theta_3(1-3s)
	\end{aligned}
\end{equation*}	
that is
\begin{equation}\label{defesse}
	(\theta_1-\theta_2)s^2+2\bigl(\theta_1-\tfrac32\theta_3\bigr)s
		+\theta_1+\theta_2+\theta_3=0
\end{equation}
Assuming $\theta_2<\theta_1$ (necessary condition for high-frequency stability),
equation \eqref{defesse} has positive roots if and only if
\begin{equation*}
	2\theta_1-3\theta_3<0\qquad\textrm{and}\qquad
	\bigl(\theta_1-\tfrac32\theta_3\bigr)^2-(\theta_1-\theta_2)(\theta_1+\theta_2+\theta_3)>0.
\end{equation*}
which can be equivalently rewritten as
\begin{equation}\label{unstabcond}
	3\theta_3<2\theta_1
	\qquad\textrm{and}\qquad
	(2\theta_2+\theta_3)^2+8(\theta_3-\theta_1)^2<8\theta_1^2.
\end{equation}
If these conditions are satisfied, for $s$ satisfying \eqref{defesse},
the expression \eqref{xi2} for $\xi^2$
%becomes
%\begin{equation*}
%	\tau\xi^2=\frac{(1+s)^3}{\theta_1(1+s)^2+2\theta_2(1+s)+\theta_3(3-s)},
%\end{equation*}
%that
reduces to
\begin{equation*}
	\xi^2=\frac{(1+s)^2}{\theta_2(1+s)+2\theta_3}.
\end{equation*}
having used the relation $\theta_1(1+s)^{2}+\theta_2(1+s)(1-s)+\theta_3(1-3s)=0$.

Thus, the unknown $\xi$ is explictly given by
\begin{equation*}
	\xi=\pm \frac{1+s}{\{\theta_2(1+s)+2\theta_3\}^{1/2}},
\end{equation*}
where $s=\zeta^2$ is a positive root of \eqref{defesse}.
\end{proof}

A graphical representation of the different stability regimes relative to the values of
parameter $\theta_1,\theta_2,\theta_3$ is given in Figure \ref{fig:stabregions1},
where the regions lie in the plane $(\eta_2,\eta_3)$ with $\eta_j:=\theta_j/\theta_1$.

In the analysis of diffusion equations with memory,
a typical requirement on the kernel $g$ concerns with its monotonicity and convexity properties.
Thus, we continue the study of the case \eqref{kernelk=2} determining the parameter range
in which such assumptions are satisfied.
Let us set
\begin{equation*}
	\mathcal{S}:=\{0\leq \eta_2\leq 1\}\cap
	\left(\{\eta_3\leq \tfrac{2}{3}\}\cup\Bigl\{\tfrac{1}{8}(2\eta_2+\eta_3)^2+(\eta_3-1)^2\leq 1\right\}\Bigr)
\end{equation*}
Our aim is to compare $\mathcal{S}$ with the regions $\mathcal{M}$ and $\mathcal{C}$
of monotonicity and mononotonicity + convexity, respectively.
\vskip.25cm

\noindent{\bf Monotonicity.}
There holds
\begin{equation*}
	g'(t)=-\tfrac12\left\{2(1-\eta_2)+2(\eta_2-\eta_3)t+\eta_3\,t^2\right\}\theta_1\,e^{-t}
\end{equation*}
where, as usual, $\eta_k=\theta_k/\theta_1$.
Then, $g$ is decreasing if and only if the polynomial
\begin{equation*}
	p_1(t):=2(1-\eta_2)+2(\eta_2-\eta_3)t+\eta_3\,t^2
\end{equation*}
is positive for any $t>0$.
Since $\eta_3>0$, a necessary condition is $p_1(0)=2(1-\eta_2)>0$.
If this is satisfied, then the condition holds if and only if
\begin{equation*}
	\textrm{either}\quad \eta_2-\eta_3\geq 0
	\qquad\textrm{or}\quad
	\left\{\begin{aligned}
	 &\eta_2-\eta_3\leq 0,\\ & (\eta_2-\eta_3)^2-2(1-\eta_2)\eta_3\leq 0.
	\end{aligned}\right.
\end{equation*}
Collecting, we deduce that, in the plane $(\eta_2,\eta_3)$,
the parameters region in which $g$ is decreasing is
\begin{equation*}
	\mathcal{M}:=\{0\leq \eta_2\leq 1,\,\eta_3\geq 0\}\cap
		\Bigl(\{\eta_2-\eta_3\geq 0\}\cup\{\eta_2^2+(\eta_3-1)^2\leq 1\}\Bigr).
\end{equation*}
\noindent{\bf Convexity.}
Differentiating, we infer
\begin{equation*}
	g''(t)=\tfrac12\left\{2(1-2\eta_2+\eta_3)+2(\eta_2-2\eta_3)t+\eta_3\,t^2\right\}\theta_1\,e^{-t}
\end{equation*}
Then, $g$ is convex if and only if the polynomial
\begin{equation*}
	p_2(t):=2(1-2\eta_2+\eta_3)+2(\eta_2-2\eta_3)t+\eta_3\,t^2
\end{equation*}
is positive for any $t>0$.
Again, since $\eta_3>0$, a necessary condition is $p_2(0)=2(1-2\eta_2+\eta_3)>0$.
If this holds, function $g$ is convex if and only if
\begin{equation*}
	\textrm{either}\quad \eta_2-2\eta_3\geq 0
	\qquad\textrm{or}\quad
	\left\{\begin{aligned}
	 &\eta_2-2\eta_3\leq 0,\\ & (\eta_2-2\eta_3)^2-2(1-2\eta_2+\eta_3)\eta_3\leq 0.
	\end{aligned}\right.
\end{equation*}
Hence, the region $\mathcal{C}$ in the plane $(\eta_2,\eta_3)$ is
the parameters region in which $g$ is decreasing is
\begin{equation*}
	\mathcal{C}:=\{\eta_2,\,\eta_3\geq 0,\,2\eta_2-\eta_3\leq 1\}\cap
	\Bigl(\{\eta_2-2\eta_3\geq 0\}\cup\left\{2\eta_2^2+4(\eta_3-1/2)^2\leq 1\right\}\Bigr)
\end{equation*}
\begin{figure}
\leftline{\hskip1cm\includegraphics[width=4cm]{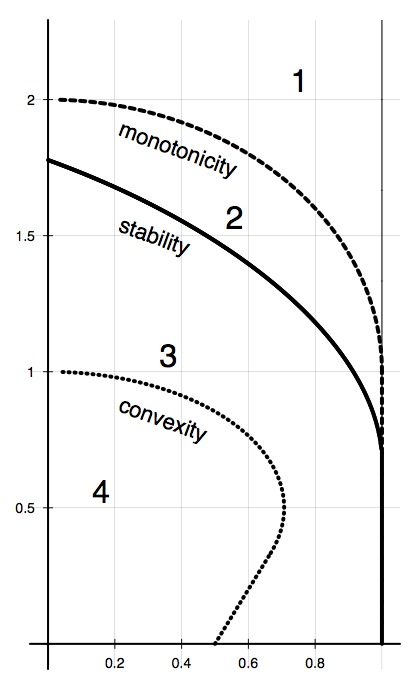}}
\vskip-6.25cm	\hskip5.25cm\includegraphics[width=8cm]{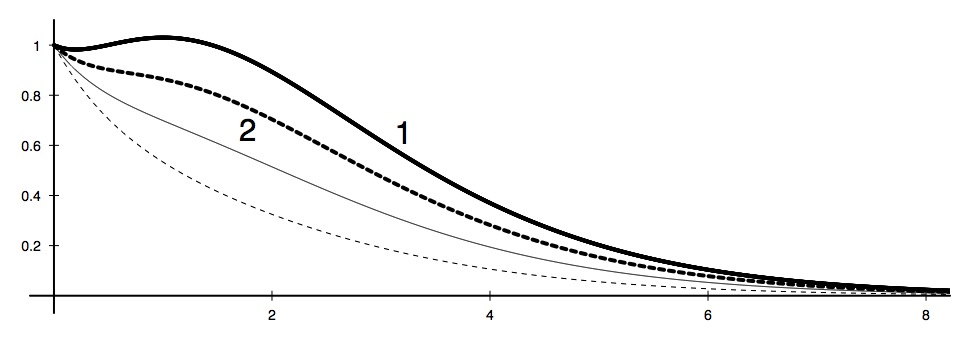}\\
\vskip.15cm	\hskip5.25cm \includegraphics[width=8cm]{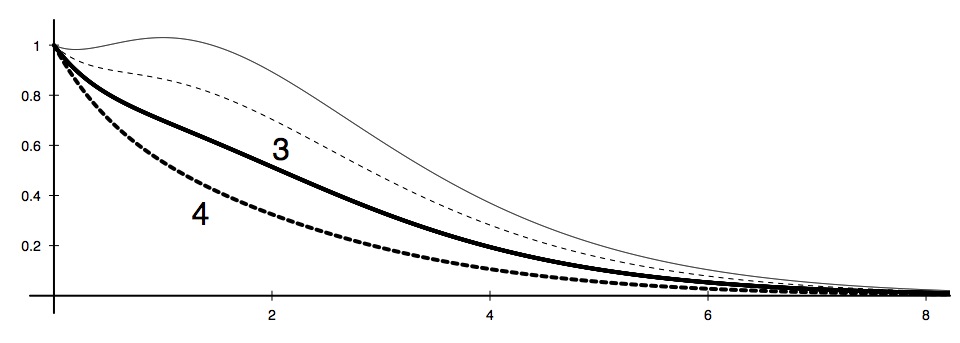}
%\vskip1cm
\caption{\footnotesize 
Left: Regions  $\mathcal{S}$, $\mathcal{C}$ and $\mathcal{M}$ in the plane $(\eta_2,\eta_3)$.
Right: Graph of the kernels corresponding to points 1, 2, 3 and 4.
Top-right:		point 1, $(\eta_2,\eta_3)=(0.8, 2.0)$ (non-monotone, unstable);
			point 2, $(\eta_2,\eta_3)=(0.6,1.5)$ (monotone, non-convex, unstable).
Bottom-right:	point 3, $(\eta_2,\eta_3)=(0.4,1)$ (monotone, non-convex, stable);
			point 4, $(\eta_2,\eta_3)=(0.2, 0.5)$ (monotone, convex, stable).}
\label{fig:SMC}
\end{figure}

\noindent{\bf Relation with stability.}
By inspection, it can be seen that 
\begin{equation*}
	\mathcal{C}\subset \mathcal{S}\subset \mathcal{M}.
\end{equation*}
A graphic representation of the regions $\mathcal{S}$, $\mathcal{C}$ and $\mathcal{M}$
is provided in Figure \ref{fig:SMC}.

Thus, for kernels of the form \eqref{kernelk=2}, stability is guaranteed by convexity.
This is coherent with the result in \cite{ContMarcPata13},
taking into account that the function $\mu$ corresponds to $-g'$.
Indeed, such result is relative to convex kernels ($\mu$ non-increasing)
and apply to the present case since the ``flatness rate'' (see details in the above reference)
for kernel of the form \eqref{kernelk=2} is always zero.

Also, still in the case of kernel of the form \eqref{kernelk=2}, non-monotonicity implies instability.
Differently, we can state that monotonicity is a necessary condition for stability for kernels 
\eqref{kernelk=2}, consistently with Theorem 1.5 in \cite{ChepMainPata06} (proved for a
general class of memory kernels).

Finally, the analysis exhibits a large region of decreasing non-convex
kernels divided into two parts by the stability/instability transition line.
In particular, there exist (monotone) kernels $g$ which are stable even
if not convex.

{\small
\subsection*{Acknowledgements}
The author is thankful to Maurizio Grasselli and Vittorino Pata for some useful (electronic)
discussions on the topic.
This work was partially supported by the Project FIRB 2012
``Dispersive dynamics: Fourier Analysis and Variational Methods''.
}

\end{document}